\newtheorem{theorem}{Theorem}[section]
\newtheorem{lemma}[theorem]{Lemma}
\newtheorem{corollary}[theorem]{Corollary}
\newtheorem{proposition}[theorem]{Proposition}
\theoremstyle{definition}
\newtheorem{definition}[theorem]{Definition}
\newtheorem{example}[theorem]{Example}
\theoremstyle{remark}
\newtheorem{remark}[theorem]{Remark}
\theoremstyle{Conjecture/open problem}
\theoremstyle{assumption}
\newtheorem{assumption}{Assumption}
\theoremstyle{conjecture}
\newtheorem{conjecture}{Statement}
\newcommand{\intval}{q}
\newcommand{\st}{\,\mid \,}
\def\la{\lambda}
\def\I{\mathcal{I}}
\def\Pr{\mathbb{P}}
\def\a{\alpha}
\def\be{\beta}
\def\Ga{\Gamma}
\def\1{\mathbf{1}}
\def\la{\lambda}
\def\p{^{\prime}}
\def\pp{^{\prime\prime}}
\def\k{\kappa}
\def\cG{\mathcal{G}}
\def\cR{\mathcal{R}}
\def\cS{\mathcal{S}}
\def\cC{\mathcal{C}}
\def\R{\mathbb{R}}
\def\N{\mathbb{N}}
\def\Z{\mathbb{Z}}
\numberwithin{equation}{section}
\begin{document}

\title[An algebraic approach to stochastic CRN]{An algebraic approach to product-form stationary distributions for some reaction networks}

\author[Beatriz Pascual-Escudero]{Beatriz Pascual-Escudero}
\address{Department of Mathematical Sciences, University of Copenhagen, Denmark}
\email{beatriz.pascual.escudero@gmail.com}

\author[Linard Hoessly]{Linard Hoessly}
\address{Department of Mathematical Sciences, University of Copenhagen, Denmark}
\email{hoessly@math.ku.dk}

\subjclass[2010]{12D10, 14P10, 60J28, 60K35, 80A30, 82C20, 92C42, 92B05, 92E20}
\keywords{Chemical reaction network, mass-action system, product-form stationary distribution, Markov process, particle systems }

\begin{abstract}
Exact results for product-form stationary distributions of Markov chains are of interest in different fields. In stochastic reaction networks (CRNs), stationary distributions are mostly known in special cases where they are of product-form. However, there is no full characterization of the classes of networks whose stationary distributions have product-form.
We develop an algebraic approach to product-form stationary distributions in the framework of CRNs. Under certain hypotheses on linearity and decomposition of the state space  for conservative CRNs, this gives sufficient and necessary algebraic conditions for product-form stationary distributions. Correspondingly we obtain a semialgebraic subset of the parameter space that captures rates where, under the corresponding hypotheses, CRNs have product-form. We employ the developed theory to CRNs and some models of statistical mechanics, besides sketching the pertinence in other models from applied probability.
\end{abstract}

\maketitle

\section{Introduction}\label{intro_s}
 Many stochastic processes of interest count abundances of different entities, and serve as basic models in applied or theoretical probability. Often, such models can be expressed as
 continuous-time Markov chains (CTMCs) on $\Z_{\geq 0}^n$. In terms of examples, consider, e.g., reaction networks \cite{mcquarrie_1967}, interacting particle systems \cite{liggett}, ecology \cite{ecol_May}, networks in the sciences \cite{Goutsias_ov} or queuing networks \cite{serfozo}. In this work, we study the stationary distributions of closed systems of this type, 
 focussing on conditions for product-form. In particular, our results exhibit that in different cases of interest the subset of the parameter space\footnote{I.e. for the parameters involved in the stochastic model in question.} with product-form stationary distribution is semi-algebraic. While we focus on stochastic CRNs, the developed theory applies broadly to similar models, i.e. as long as the parametrisation of the CTMC model  is linear and has a certain state space decomposition (see Sections \ref{subs_other_m}). 

\subsection{Reaction networks}

CRNs are an effective way  
to describe interactions of different species through mathematical models in the sciences.
They are of interest in 
 biochemistry, systems biology and cellular biology, 
besides 
other applications \cite{Goutsias_ov,overv_Gorban,gardiner}.

A CRN consists of a set of reactions, that come together with associated reaction rates that govern their speed. They are usually expressed via their reaction graph. 
As an example, consider the reversible Michaelis-Menten mechanism:
\begin{equation}
S+E\rightleftharpoons ES\rightleftharpoons P+E.\end{equation}
Two approaches are adopted when it comes to studying the dynamics of CRNs. 
Deterministic models mostly consist 
 of ordinary differential equations (ODE) which describe  
 the changes in species' concentrations in time.
If the molecular counts in the system are low, stochastic models like CTMCs are usually better suited to describe the dynamics. Such stochastic and deterministic systems can be analysed in terms of the transient behavior, i.e. their evolution in time. On the other hand, the stationary behavior is of interest as well, e.g. the description of the system when it has reached an equilibrium, if it does. 
In the following we mostly focus on stochastic CRNs and their stationary behavior.

\subsection{Stochastic reaction networks and analytical results on stationary distributions}
Traditionally, deterministic models
have been the preferred modelling choice. However, 
the analysis of living systems with small molecular counts and stochasticity has become essential. Correspondingly there is an increased interest in the dynamics of the CTMC for CRNs. 
However, due to the difficulty in analytical tractability, most studies are based on approximations or simulations \cite{gardiner,Saito1,Saito2,gillespieb}. Distinctive differences separate the stochastic and deterministic models, both in terms of their transient and stationary behavior. Besides the obvious differences, studied phenomena include, e.g., the small number effect \cite{Saito1}, absorption in absolutely robust CRNs \cite{Anderson14}, condensation \cite{hoesslysta}, or discreteness induced transitions \cite{Saito2}. 

Characterising transient and stationary behaviour of stochastic CRNs are \\
formidable tasks in general, and they are often examined via simulations \cite{gillespieb}. 
Analytical solutions for the stationary distribution (in case of existence) are unknown for most systems, except for some special cases. Some stationary distributions of CRNs are well-understood. Both complex balanced  \cite{anderson2} and autocatalytic CRNs  \cite{hoesslysta} have product-form stationary distributions (see e.g. Definition \ref{prod_f}).
Furthermore \cite{hoesslyuni} offers sufficient conditions for product-form stationary distributions through decompositions of CRNs as well as different examples. Note that solutions  of product-form stationary distributions can be used to prove positive recurrence, see, e.g. \cite[II, Corollary 4.4]{asmussen2003applied} for the general case, or \cite{non-expl} for the application to complex balance. 
Beyond these results, little is known concerning explicit stationary distributions. Analytical results on product-form stationary distributions allow convenient expressions for the long-term dynamics \cite{anderson2,hoesslyuni,liggett,Schadschn_cx,serfozo}. Furthermore they often enable the rigorous study of scaling limits, condensation, or other features of the dynamics \cite{hoesslysta,Cao2,kipnis1999sli}.

Previous results were mostly focussed on particular classes of stochastic CRNs for which the stationary distributions can be given, i.e. \cite{anderson2,non-stand_2,hoesslysta,hoesslyuni}. Here, we aim to give tools to systematically find regions of the reaction rate space 
where the corresponding stochastic CRN has or has not product-form stationary distribution. The underlying idea is that the condition for product-form stationary distributions for closed CTMC dynamics on $\Z_{\geq 0}^n$ with linear parametrisations is canonically related to conditions expressible through algebraic geometry.  This enables to study the semi-algebraic set of product-form for corresponding CRNs computationally via a bottom-up approach.
\subsection{Overview and main results}
 We recall in Section \ref{RN} main concepts of stochastic CRNs, with corresponding notions as primary examples and motivation for our study. The theoretical part in Section \ref{geom_v} then consists of the following.
\begin{enumerate}
\item We show that stationary distributions have polynomial parametrisations, as long as the reaction rates appear linearly in the kinetics (Lemma \ref{clm}).
\item We give necessary, and sometimes equivalent, conditions for product-form stationary distributions for sequences of probability distributions along irreducible components of specific forms (Theorem \ref{suff_condn}).
\item In Section \ref{semi} we combine the previous findings to define an ideal and a semi-algebraic set that characterises CRNs with product-form stationary distribution, under certain assumptions on the parametrisation of the reaction rates and the structure of the irreducible components.
\end{enumerate}
While for some conservative CRNs, the conditions we obtain are sufficient and necessary, for others they are only necessary, or do not apply (e.g. the infinite case). On the other hand the results derived do not depend on the kinetics at hand, and the methods can be applied to other models of CTMCs. 
We analyse some classes of examples from CRN theory and particle systems in Section \ref{exa} and sketch some further applications, ending with a discussion of our considerations in Section \ref{disc}.

\subsection{Notation}
We write $[k]$ for the set $\{1,\cdots , k\}$, $\pi$ for the stationary distributions and $Z$ the normalising constants, when we do not further specify the domain. $\Ga$ denotes an irreducible component, and if we want to specify the domain we write $\pi_\Ga$ for the stationary distribution on $\Ga$ and $Z_\Ga$ for the corresponding normalising constant.

For a finite set $\Ga$, 
let $\Delta_{\Ga}$ stand for the probability simplex, i.e.
$$\Delta_{\Ga}:=\{x\in \R^\Ga| \sum_{i\in\Ga}x_i=1,x_i\geq 0 \text{ for }i\in\Ga\}, $$
and let $\mathring \Delta_{\Ga}$ be the ordinary interior of $\Delta_{\Ga}$, consisting of probability distributions which are nonzero on all coordinates.
Let us denote by $\R\Pr^{\Ga}$, $\R\Pr^{\Ga}_{\geq 0}$ and $\R\Pr^{\Ga}_{> 0}$ the 
real, nonnegative real and positive real projective space on $\Ga$ respectively.
We write $e_i$ for the vector with $i$-th entry equal to one and all other entries equal to zero.

\section{Stochastic reaction networks}\label{RN} \label{modelCRN}
\subsection{Basic terminology for CRNs}\label{reaction_basic}

A \emph{reaction network} consists of three finite sets $\cG=(\cS,\cC,\cR)$ where
 $\cS$ is the set of \emph{species} $\cS=\{S_1,\cdots,S_n\}$, $\cC$ is the set of \emph{complexes} and $\cR\subseteq \cC\times\cC$ is the set of \emph{reactions}.  

Complexes correspond to nonnegative linear combinations of species over $\Z_{\geq 0}$, which we write in the form
$\nu=\sum_{i=1}^n\nu_i S_i$, and identify with vectors $\nu\in\Z_{\geq 0}^n$. 
 Reactions consist of ordered tuples $(\nu, \nu\p)\in \cR$. The sets $\cC$ and $\cR$ are such that $(\nu , \nu )\not\in \cR$ and that if $\nu \in \cC$, there exists $\nu \p \in\cC$ such that either $(\nu , \nu \p)\in\cR$, or $(\nu \p,\nu )\in\cR$. The \emph{stochiometric subspace} is defined as the linear subspace $\mathcal{T}={\rm span}_{(\nu , \nu \p)\in\cR}\{\nu\p-\nu\}\subset \mathbb{R}^n$.  A CRN
  $\cG=(\cS,\cC,\cR)$  is \emph{conservative} (resp. subconservative) if there is a vector of positive weights $w\in \R^\cS_{>0}$ such that for any reaction $(\nu , \nu \p)\in\cR$ we have that $\sum_{S_i\in\cS}\nu_i w_i=\sum_{S_i\in\cS}\nu\p_i w_i$ (resp. $\sum_{S_i\in\cS}\nu_i w_i\geq\sum_{S_i\in\cS}\nu\p_i w_i$).

We usually describe a reaction network by its \emph{reaction graph}, which is the directed graph with vertices $\cC$ and edge set $\cR$. This way, we represent a reaction $(\nu , \nu \p)\in\cR$ as $\nu \to \nu \p$ and say that it consumes the \emph{reactant} $\nu$ and creates the \emph{product} $\nu\p$. We also say $\nu \in \cC$ \emph{reacts} to $\nu\p \in \cC$ if $\nu\to \nu\p$ is a reaction. The \emph{molecularity} of a reaction $\nu\to \nu\p \in\cR$ is equal to the number of molecules in the reactant $|\nu|=\sum_i \nu_i$. A connected component of the reaction graph of $\cG$ is termed a \emph{linkage class}. A CRN is \emph{reversible} if $\nu\to \nu\p \in\cR$ whenever $ \nu\p\to \nu \in\cR$. It is \emph{weakly reversible} if for any reaction $\nu\to \nu\p \in\cR$, there is a directed path in the graph beginning with $\nu\p$ as a reactant and ending with $\nu$ as a product complex. If it is not weakly reversible we say it is \emph{non-weakly reversible}. The \emph{deficiency}  of a reaction network $\cG$ is defined as $\delta=|\cC|-\ell-{\rm dim}(\mathcal{T}),$
 where $\ell$ is the number of linkage classes.  
	
	 Each reaction $\nu\to \nu\p$ has a positive \emph{reaction rate constant} (or simply \emph {rate}) $\k_{\nu\to\nu\p}$; the  vector of reaction rates is defined by $\k\in\R_{>0}^\cR$
 and the CRN with rates $\k $ is denoted by $(\cG,\k)$.

We will often refer to two specific classes of CRNs: Complex balanced CRNs which are defined e.g. in \cite[Section 3.2]{anderson2}, and autocatalytic CRNs which are defined in \cite[Section 3]{hoesslysta}. We note that the former are always weakly reversible, while the later are mostly non-weakly reversible.

  \subsection{Stochastic model of reaction networks}\label{stoch_model}
 The evolution of species counts follows a Markov process, where the vector $X(t)=x\in \Z_{\geq 0}^n$ changes, according to transitions determined by the reactions $\nu\to \nu\p$,  by jumping from $x$ to $x+\nu\p-\nu$  with transition intensity $\la_{\nu \to \nu\p}(x)$.
The transition intensity functions $\la_{\nu \to \nu\p}:\Z_{\geq 0}^n\to \R_{\geq 0}$ then give the $Q$-matrix of the form
$$Q(x,x+\xi):=\sum_{\nu\to\nu\p\in \cR\st-\nu + \nu\p=\xi}\la_{\nu \to \nu\p}(x)$$
such that the Markov process satisfies
$$P(X(t+\Delta t)=x+\xi|X(t)=x)=\sum_{\nu\to\nu\p\in \cR\st-\nu + \nu\p=\xi}\la_{\nu \to \nu\p}(x)\Delta t+ o(\Delta t).$$

The transition intensity function under Mass-action kinetics (more general kinetics are possible as well \cite{anderson2,non-stand_2}) associated to the reaction $\nu\to\nu\p$  is 
\begin{equation}
\label{int}\la_{\nu \to \nu\p}(x)=\k_{\nu\to\nu\p}\frac{(x)!}{(x-\nu)!}
\1_{x\geq \nu},
\end{equation}
$\text{ where }z!:=\prod_{i=1}^nz_i!\text{ for } z\in \Z^n_{\geq 0}$ and $\1_{x\geq \nu}$ is one if and only if $x_i\geq \nu_i$ for all $ i\in [n]$ and zero otherwise.

Stochastic CRNs are analysed via inspection of the underlying CTMC, where the state space is decomposed into different types of states (i.e. transient, recurrent, positive recurrent, see, e.g., \cite[Sections 3.4, 3.5]{norris}).

 We next introduce some terminology for stochastic CRNs:

A reaction $\nu \to \nu \p$ is \emph{active} on $x\in \Z_{\geq 0}^n$ if $x_i\geq \nu_i $ for all $ i\in [n]$. A state $u\in \Z_{\geq 0}^n$ is \emph{accessible} from another state $x\in \Z_{\geq 0}^n$ if $u$ is reachable from $x$ via its CTMC dynamics. We denote this by $x\to_{\cG}u$.

 A non-empty set $\Ga\subset\Z_{\geq 0}^n$ is an \emph{irreducible component} of $\cG$ if for all $x\in \Ga$ and all $u\in  \Z_{\geq 0}^n$, $u$ is accessible from x if and only if $u\in \Ga$.

  An irreducible component $\Ga$ is \emph{positive} if for all reactions there is some $x\in \Ga$ such that the reaction is active on that state.
An irreducible component is \emph{positive recurrent} if it contains a state that is positive recurrent, as then all states in the irreducible component are positive recurrent (see e.g. \cite[Theorem 3.5.3]{norris}).
	
 We say $\cG$ is \emph{essential} if the state space is a union of irreducible components, and

 $\cG$ is \emph{almost essential} if the state space is a union of irreducible components except for a finite number of states.

On irreducible components, positive recurrence (also called ergodicity) is equivalent to non-explositivity together with existence of an invariant distribution \cite[Theorem 3.5.3]{norris}, where non-explositivity defined in \cite[Section 2.7]{norris}. Non-explositivity corresponds to the CTMC not having infinitely many jumps over a compact time interval. The classification of stochastic CRNs is challenging in general. While different results exist like e.g. for positive recurrence \cite{andersonpos},  non-explositivity of complex balanced CRN \cite{non-expl}, extinction/absorption events \cite{extinction,quasi_stat}, quasi-stationary distributions \cite{quasi_stat} or 1-d stochastic CRNs \cite{xu2019dynamics}, we are far from a complete characterization for most.

\subsection{Stationary distributions of reaction networks} \label{stat}
Let $X(t)$ denote the underlying stochastic process from a CRN on an irreducible component $\Ga$. Then, given that the stochastic process $X(t)$ is positive recurrent on $\Ga$ and starts in $\Ga$, we have that the limiting distribution is the stationary distribution, i.e.,
$$\lim_{t\to\infty}P(X(t)\in A)=\pi_\Ga(A),\text{for any } A\subset\Ga.$$
Then, the stationary distribution $\pi_\Ga$ on an irreducible component $\Ga$ is unique and describes the long-term behavior \cite[Theorem 3.6.3]{norris}.  The stationary distribution is (implicity) determined by the Master equation of the Markov chain:
\begin{equation} \label{master_eq1}
  \sum_{\nu\to\nu\p\in\cR} \pi(x+\nu-\nu\p)\la_{\nu \to \nu\p}(x+\nu -\nu\p)=\pi(x)\sum_{\nu\to\nu\p\in\cR}\la_{\nu \to \nu\p}(x),
  \end{equation}
for all $x\in\Ga$. We denote by $ME(\cG,\Ga)$ the set of equations derived from the Master equation on the irreducible component $\Ga$ in terms of the reaction rates.

Under Mass-action kinetics, the Master equation then takes the following form:

\begin{equation} \label{master_eq2}  \sum_{\nu\to\nu\p\in\cR} \pi(x+\nu-\nu\p)\k_{\nu\to\nu\p}\frac{(x-\nu\p+\nu)!}{(x-\nu\p)!}\1_{x\geq \nu\p}=\pi(x)\sum_{\nu\to\nu\p\in\cR}\k_{\nu\to\nu\p}\frac{(x)!}{(x-\nu)!}\1_{x\geq \nu}.\end{equation}

Solving equation (\ref{master_eq1}) is in general a challenging task, even when restricting to the Mass-action case. 
\begin{remark}\label{rmk_pos_cons}
Sub-conservative CRNs have finite irreducible components, such that their CTMC dynamics are non-explosive \cite[Theorem 2.7.1]{norris}. Hence on irreducible components they are positive recurrent with unique solution to the Master equation \cite[Theorem 3.5.3]{norris}. 
Note that for infinite CTMCs, existence of stationary distribution does not imply positive recurrence, cf., e.g. \cite[Ex 3.5.4]{norris}. 
\end{remark}

Consider a reaction network $(\cG,\k)$ with corresponding CTMC dynamics that has at least one nontrivial\footnote{I.e. with more than one element.} and positive recurrent irreducible component. \begin{definition}\label{prod_f}
We say:\begin{enumerate}
    \item $(\cG,\k)$ has \emph{product-form} stationary distribution if there exist functions indexed by species $f_i: \N\to \R_{>0}; S_i\in \cS$ such that for any positive recurrent irreducible component $\Ga$ the stationary distribution has the form
$$\pi_\Ga(x)=Z^{-1}_\Ga\prod_{S_i\in\cS}^{ } f_i(x_i)$$
 for any $x\in\Ga$, where $Z_\Ga$ is a normalizing constant defined by\\
$Z_\Ga=\sum_{x\in\Ga}\prod_{S_i\in\cS}^{ } f_i(x_i)$.
\item $(\cG,\k)$ has \emph{$\I$-product-form} stationary distribution if there is a nontrivial\footnote{i.e $\cS\not\in\I$, and w.l.o.g. assume that $I_j\neq I_l$ for  $j\neq l$, $j,l\in [k]$} covering $\I=(I_j)_{j\in [k]}$ of the species set $\cS$ (i.e. $\cS=\cup_{j\in [k]}I_j$) with functions $f_{I_j}:\Z_{\geq0}^{I_j}\to \R_{>0}$ such that for any positive recurrent irreducible component $\Ga$ the corresponding stationary distribution has the form
$$\pi_\Ga(x)=Z^{-1}_\Ga\prod_{j=1}^{k} f_{I_j}(x_{I_j})$$
where $Z_\Ga$ is the normalizing constant.
\end{enumerate}
\end{definition}

Note that product-form corresponds to $\mathcal{I}$-product-form with $\mathcal{I}$ all 1-element sets as partition. Furthermore, product/$\I$-product-form stationary distributions only matter on nontrivial positive recurrent irreducible components (consider e.g.
$
A\to B$ for an example of a trivial one). Note also that for CRNs with infinite irreducible components, positive recurrence can be hard to check, cf., e.g., \cite{xu2019dynamics}.
\begin{remark}
Observe that Definition \ref{prod_f} is more restrictive than, e.g., only asking to have a factorisation for one positive recurrent irreducible component.
\end{remark}

For $\I$-product-form, the following holds concerning their comparison.

\begin{lemma}\label{prod_str}
For two covers of $\cS$, if $\I_1\leq\I_2$, i.e. $\I_1$ refines $\I_2$ as a cover\footnote{say $\I_1\leq\I_2$ if $\forall I_j\in\I_1\exists I'_j\in\I_2$ such that $I_j\subseteq I'_j$}, then $\I_1$ product-form implies $\I_2$ product-form for $(\cG,\k)$.
\end{lemma}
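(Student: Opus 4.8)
The plan is to show that any factorization of $\pi_\Ga$ indexed by the finer cover $\I_1$ can be regrouped into one indexed by the coarser cover $\I_2$, using the same normalizing constant and functions that do not depend on the component $\Ga$. Write $\I_1=(I_j)_{j\in[k_1]}$ and $\I_2=(I'_l)_{l\in[k_2]}$. Since $\I_1\leq\I_2$, every block $I_j$ is contained in at least one block of $\I_2$; as both covers are finite I would fix, once and for all, a choice map $\sigma:[k_1]\to[k_2]$ with $I_j\subseteq I'_{\sigma(j)}$ for each $j$. This map assigns each $\I_1$-block to exactly one $\I_2$-block and partitions the index set $[k_1]$ into the fibers $\sigma^{-1}(l)$, $l\in[k_2]$.

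Next, for each $l\in[k_2]$ I would define
$$g_{I'_l}(x_{I'_l}):=\prod_{j\in\sigma^{-1}(l)}f_{I_j}(x_{I_j}),$$
with the convention that an empty product equals $1$. The point that makes this well defined is that $I_j\subseteq I'_l$ whenever $\sigma(j)=l$, so each $x_{I_j}$ is the subtuple of $x_{I'_l}$ indexed by $I_j$; hence $g_{I'_l}$ is genuinely a function $\Z_{\geq0}^{I'_l}\to\R_{>0}$, positivity being inherited from the $f_{I_j}$. Because the fibers $\{\sigma^{-1}(l)\}_{l\in[k_2]}$ form a partition of $[k_1]$, regrouping the factors yields
$$\prod_{l=1}^{k_2}g_{I'_l}(x_{I'_l})=\prod_{j=1}^{k_1}f_{I_j}(x_{I_j}),$$
so for every positive recurrent irreducible component $\Ga$ and every $x\in\Ga$ one gets $\pi_\Ga(x)=Z_\Ga^{-1}\prod_{l=1}^{k_2}g_{I'_l}(x_{I'_l})$ with the same $Z_\Ga$. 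Since the $g_{I'_l}$ are built from the $\Ga$-independent $f_{I_j}$, this is exactly an $\I_2$-product-form in the sense of Definition \ref{prod_f}. Note that the argument is purely combinatorial: it uses only the shape of the factorization, not stationarity or the master equation.

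I expect the only genuine subtlety to be the overlap inherent in working with covers rather than partitions: a single $\I_1$-block can sit inside several $\I_2$-blocks, and distinct $\I_1$-blocks may share species. The choice map $\sigma$ is precisely what prevents double counting, ensuring each factor $f_{I_j}$ is absorbed into exactly one $g_{I'_l}$ so that the total product, and hence $Z_\Ga$, is left unchanged; overlaps among the $I_j$ themselves cause no trouble, since $\I_1$-product-form already tolerates them. One should also read the statement under the standing assumption that $\I_2$ is nontrivial (i.e. $\cS\notin\I_2$), as required by Definition \ref{prod_f}; the degenerate case $\I_2=\{\cS\}$ carries no content.
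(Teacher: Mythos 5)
Your proof is correct: the paper states Lemma \ref{prod_str} without giving any proof, and your regrouping argument---fixing a choice map $\sigma$ that assigns each block of $\I_1$ to a containing block of $\I_2$, then absorbing the factors $f_{I_j}$ into functions $g_{I'_l}$ over the fibers of $\sigma$---is exactly the routine verification the paper leaves implicit. You also correctly identify the one genuine subtlety (covers, unlike partitions, allow a block of $\I_1$ to lie in several blocks of $\I_2$, so the choice map is what prevents double counting and keeps $Z_\Ga$ unchanged), and your handling of it is sound.
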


The notion of product-form is used in numerous works in applied probability  \cite{liggett,Goutsias_ov,serfozo,kipnis1999sli} and is the strongest notion, i.e. it implies any other $\I$-product-form by Lemma \ref{prod_str}. The notion of pair-product-form
stationary distribution from statistical physics (see, e.g. \cite[Section 6.1.1]{Evans1} or \cite[Section 3.6.1]{Schadschn_cx}) is an example
of $\I$-product-form,  where $\I$ consists of pairs of overlapping 2-sets. 
While it is rare not to have $\I$-product-form in studied examples, they exist.
For a mass transport model with explicit stationary distributions that is similar, but not expressible as $\I$-product-form we refer to \cite{Guioth}.

In the following we mostly focus on almost-essential conservative CRNs. By finiteness (see Remark \ref{rmk_pos_cons}), CTMC dynamics are positive recurrent on irreducible components.
\begin{definition}\label{three_cas}
We distinguish three cases for a CRN $\cG$ with given kinetics:
\begin{enumerate}
\item[(I)] $\cG$ has \emph{product-form stationary distribution independently 
of the rate}
if for any rate $\k\in\R_{>0}^\cR$, $(\cG,\k)$ has product-form stationary distribution.
\item[(N)] $\cG$ has \emph{non-product-form stationary distribution independently of the rate} if for any rate $\k\in\R_{>0}^\cR$, $(\cG,\k)$ has no product-form stationary distribution.
\item[(E)] $\cG$ can attain \emph{both product- and non-product-form stationary distribution} if that holds for different $(\cG,\k)$ with rates in $\k\in\R_{>0}^\cR$.

\end{enumerate}
\end{definition}

\subsection{Known results on stationary distributions} \label{res_stat}
The classification of the stationary behaviour of reaction networks
is challenging, and often studied via simulations \cite{gillespieb}.
In particular, analytical solutions for the stationary distribution (in case of existence) are not known for most systems. Some stationary distributions of weakly reversible reaction networks are well-understood. This is the case for complex balanced and autocatalytic CRNs.

Complex balanced CRNs \cite{anderson2} have a Poisson product-form stationary distribution \cite{anderson2} and can be characterized by that \cite{Cappelletti}: For a complex balanced CRN $(\cG,\k)$ and an irreducible component $\Ga$, the corresponding CTMC has product-form stationary distribution of the form
 $$\pi_\Ga(x)=Z_\Ga^{-1} \frac{c^x}{x!},$$
 where $c\in\R^n_{>0}$ is a point of complex balance and $Z_\Ga$ is a normalizing constant. In particular, deficiency zero weakly reversible CRN are complex balanced independently of the rate, thus they have product-form stationary distribution independently of the rate. 

While it is currently not known which weakly reversible CRN admit a product-form stationary distribution beyond complex balance, at least some more have this property. In particular, many weakly reversible non deficiency zero CRN have \\
product-form stationary distribution independently of the rate \cite{hoesslyuni}. However, not all weakly-reversible CRNs have product-form stationary distribution independently of the rate (see Section \ref{sect_ex}).

Furthermore so-called autocatalytic CRNs (\cite[Section 3]{hoesslysta}), a class of non-weakly-reversible CRNs, also have product-form stationary distributions. Their product form functions come from an infinite family of functions, where the first one specializes to the Poisson form as above.  So, for
an autocatalytic CRN in the sense of \cite[Section 3]{hoesslysta},
the stochastic dynamics
has product-form stationary distribution
\begin{equation}\label{MainFormula}
\pi_\Ga(x)=Z_\Ga^{-1}\prod_{S_i\in\cS}^{ }f_i(x_i) ,
\end{equation}
with product-form functions
$$f_i(x_i)=\lambda_i^{x_i}\frac{1}{x_i!}\prod_{l=1}^{x_i}(1+\sum_{k=2}^{n_i}\be_i^k \prod_{r=1}^{k-1}(l-r))$$ 
on its irreducible components, where $\lambda_i$ and $\be_i^k$ are determined by the autocatalytic CRN, cf. \cite[Section 3]{hoesslysta} and where $Z_\Ga$ is the normalising constant. 

Results on systematical derivation of sufficient conditions for product-form stationary distributions through decompositions of CRNs with examples can be found in \cite{hoesslyuni}. Furthermore some results on stationary distributions for CRNs with so-called discreteness-induced transitions are available \cite{bibbona_stat}. However the corresponding stationary distributions are not of product form. 
 
\subsection{Examples}
We end this section with some illustrative examples to introduce the reader further into the setting. Note that both reversible, weakly reversible and autocatalytic CRNs are essential, cf., e.g. \cite{hoesslyuni}. 
  \begin{example}\label{ex1}
 Consider the following non-weakly-reversible CRN
 $$A\xrightarrow{\be} B,\quad 2A\xleftarrow{\a} 2B,$$
which is almost essential, with stochiometric compatibility classes dividing the state space into parts $\Ga_j=\{x\in \N^2| x_1+x_2=j\}$. The $\Ga_j$ are positive irreducible components for $j\geq2$.
 \end{example}
 
 \begin{example}\label{def1}
 The following weakly-reversible deficiency one CRN
\begin{center}
\begin{tikzpicture}[description/.style={fill=white,inner sep=1pt}]
    \matrix (m) [matrix of math nodes, row sep=1em,
    column sep=1.5em, text height=1.ex, text depth=0.25ex]
    { 2A & & 2B \\
& A+B & \\ };
   
       \path[->,font=\scriptsize]
    (m-1-1) edge node[auto] {$ \a $} (m-1-3)
          (m-2-2)  edge node[description] {$ \gamma $} (m-1-1)
    (m-1-3) edge node[description] {$\be $} (m-2-2);
\end{tikzpicture}
\end{center}

is almost essential with state space decomposition $\Ga_j=\{x\in \N^2| x_1+x_2=j\} $, where the $\Ga_j$ are positive for $j\geq2$. This CRN is complex balanced if and only if $\gamma^2=\a\be$.
 \end{example}

\section{Geometric view on product-form stationary distributions}\label{geom_v}

In this section we study algebraic and geometric aspects of the solutions to the Master equation $ME(\cG,\Ga)$ on an irreducible component for a given network $\cG$.
In Section \ref{h} we express the equations in $ME(\cG,\Ga)$ in matrix form, which allows us to express $\pi _{\Gamma}$ in terms of the reaction rates $\k$. In Section \ref{app_prod_fn} we derive conditions on the values of $\pi _{\Gamma }(x)$ for each $x\in  \Gamma $ that allow $(\mathcal{G} ,\k)$ to have product-form stationary distribution, or in some cases guarantee it. These conditions will give rise to polynomial ideals. In Section \ref{semi}, we explore the algebraic-geometric connection between these ideals of polynomial conditions and the sets of reals or positive values satisfying them. From this connection, we extract a description for the set of reaction rates that give product-form stationary distributions for $(\mathcal{G},\k)$ under certain hypotheses. The basic terminology and ideas needed to approach this connection are given in Section \ref{sec:alg_geom}

\subsection{Algebraic and geometric tools}\label{sec:alg_geom}

For simplicity we will focus on the concrete case of polynomials with real coefficients and their real solutions. 
When considering solutions to polynomial equations, it is often useful to consider all equations at the same time. Polynomial ideals formalise this idea, as they are subsets of a polynomial ring with real coefficients (like $\mathbb{R}[y_1,\ldots ,y_s]$ where $y_1,\ldots ,y_s$ are the variables), which are closed by nonlinear combinations of its elements with coefficients in the ring. For more details we refer e.g. to \cite{Cox}. 
Polynomial ideals have the following property, which follows e.g. from Hilbert's Basis Theorem (cf., e.g., \cite[Section 2.5]{Cox}):
\begin{lemma}\label{Lemma:noet}
Let $R:=\mathbb{R}[y_1,\ldots ,y_s]$ be any polynomial ring. Then the two following  equivalent properties hold:
\begin{itemize}
	\item Any ideal $I\subset R$ is generated by finitely many elements.
	\item Any increasing chain of ideals $I_0\subseteq I_1\subseteq \ldots \subseteq I_i\subseteq \ldots  $ eventually stabilizes. That is, there exists $N\in \mathbb{N}$ such that for any $j>N$, $I_N=I_j$.
\end{itemize}
\end{lemma}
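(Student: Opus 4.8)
The plan is to prove the statement in two stages: first establish that the two listed properties are equivalent for an arbitrary commutative ring, and then verify that they actually hold for the specific ring $R = \mathbb{R}[y_1, \ldots, y_s]$ by invoking Hilbert's Basis Theorem.

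For the equivalence I would argue both implications directly. To see that finite generation of every ideal forces any ascending chain $I_0 \subseteq I_1 \subseteq \cdots$ to stabilize, I form the union $I := \bigcup_{i} I_i$; since the $I_i$ are nested, $I$ is again an ideal, hence by hypothesis $I = (f_1, \ldots, f_m)$ for finitely many generators. Each $f_j$ lies in some $I_{n_j}$, and setting $N := \max_j n_j$ places all generators in $I_N$, whence $I = I_N$ and the chain is constant from index $N$ on. Conversely, to deduce finite generation from the ascending chain condition, I argue by contraposition: if some ideal $I$ were not finitely generated, I could pick $f_1 \in I$ and then recursively choose $f_{k+1} \in I \setminus (f_1, \ldots, f_k)$ -- such an element always exists precisely because $I$ is not generated by the finitely many $f_1, \ldots, f_k$ -- producing a strictly increasing chain $(f_1) \subsetneq (f_1, f_2) \subsetneq \cdots$ that never stabilizes.

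To confirm the properties hold for $R = \mathbb{R}[y_1, \ldots, y_s]$, the clean route is induction on the number of variables $s$. The fundamental base case is the field $\mathbb{R}$ itself, whose only ideals are $(0)$ and $(1)$, so it trivially satisfies both properties. The inductive step is exactly Hilbert's Basis Theorem: if a commutative ring $S$ has the property that every ideal is finitely generated, then so does $S[y]$. Writing $R = \mathbb{R}[y_1, \ldots, y_{s-1}][y_s]$ and applying the theorem with $S = \mathbb{R}[y_1, \ldots, y_{s-1}]$ completes the induction.

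The main obstacle is the proof of Hilbert's Basis Theorem itself in the inductive step. The standard argument, which I would follow, fixes an ideal $J \subseteq S[y]$ and considers for each degree $d$ the ideal $L_d \subseteq S$ of leading coefficients of degree-$d$ polynomials in $J$ (together with $0$). These form an ascending chain $L_0 \subseteq L_1 \subseteq \cdots$ in $S$, which stabilizes by the ascending chain condition on $S$; choosing finitely many generators for each $L_d$ up to the stabilization point and lifting them to polynomials in $J$, one shows via a degree-reduction argument that these finitely many polynomials generate $J$. The bookkeeping in this leading-coefficient argument is the only genuinely delicate part; everything else reduces to the elementary manipulations of chains and unions described above. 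Since the excerpt cites Hilbert's Basis Theorem as a known input (cf.\ \cite[Section 2.5]{Cox}), one may instead simply quote it, in which case the remaining work is entirely the formal equivalence of the two conditions.
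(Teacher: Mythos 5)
Your proposal is correct, and it follows essentially the same route as the paper: the paper gives no proof of this lemma at all, simply citing Hilbert's Basis Theorem (via \cite[Section 2.5]{Cox}), which is exactly the input your argument reduces to. Your additional material --- the union/contraposition argument for the equivalence of finite generation and the ascending chain condition, and the leading-coefficient induction for Hilbert's Basis Theorem --- is the standard textbook proof and is carried out correctly.
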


An \emph{algebraic set} is the set of common solutions to a finite collection of polynomial equations. A \emph{semi-algebraic set} is the set of common solutions to a finite collection of polynomial equations and polynomial inequalities.

As mentioned above, the idea of the connection between polynomial ideals and geometric sets is that any solution of a set of polynomial equations is also a solution of any non-linear combination of these equations. This establishes a connection between ideals and algebraic sets. 

Given an ideal $I\subset \mathbb{R}[y_1,\ldots ,y_s]$, we commonly denote by $\mathbb{V}(I)$ the algebraic set
$$\mathbb{V}(I)=\{ x\in \mathbb{R}^s:f(x)=0,\, \forall f\in I\}\subset \mathbb{R}^s\mbox{.}$$
If $I=\langle f_1,\ldots ,f_d\rangle \subset \mathbb{R}[y_1,\ldots ,y_s]$ is the polynomial ideal generated by $f_1,\ldots ,f_d$, then 
$$\mathbb{V}(I)=\cap _{i=1}^{d}{\mathbb{V}(\langle f_i\rangle )}\mbox{.}$$
As $I\subseteq J$ means that $J$ imposes at least the same conditions as $I$, then
$$I\subseteq J\Longrightarrow \mathbb{V}(I)\supseteq\mathbb{V}(J)\mbox{.}$$
Furthermore $\mathbb{R}(y_1,\ldots ,y_s)$ will denote the field of rational functions over the variables $y_1,\ldots ,y_s$, which is given by
$$\mathbb{R}(y_1,\ldots ,y_s):=\{\ \frac{f}{g}\st f,g\in\mathbb{R}[y_1,\ldots ,y_s],g\not = 0\} $$

\subsection{Expressing the solution set as a kernel}\label{h}\hfill\break
Let $\mathcal{G}$ be a reaction networks with CTMC dynamics having finite irreducible components. Recall that $ME(\cG,\Ga)$ stands for the set of equations derived from the Master equation \eqref{master_eq1}, for each $x\in \Gamma $. For a fixed value of the reaction rate vector $\k\in\R^\cR_{>0}$, the solution to this set of equations is a 1-dimensional subspace of $\R^\Ga$ (see Remark \ref{rmk_pos_cons}) and the stationary distribution is the solution in $\Delta_{\Ga}\subset \R^\Ga$. To formalize this, we construct the following map:
\begin{align}\label{eq:map_h_G}&\R^\cR_{>0}\xrightarrow{h_\Ga}\R^\Ga_{>0}\xrightarrow{ p }\R\Pr^\Ga_{>0}\simeq\Delta_{\Ga}\\
&\k\mapsto h_\Ga(\k)\mapsto \overline{h_\Ga(\k)}\mapsto \frac{1}{Z}h_\Ga(\k):=(\pi_\Ga(x):x\in\Ga)\mbox{.}\nonumber \end{align}
We will show that the map $h_\Ga$ is polynomial and unique (modulo normalisation, cf. Corollary \ref{clm2}). Note that $p$ is simply the projection into real projective space, and the last map is the normalisation to a probability distribution (i.e. the well-known isomorphism $\R\Pr^\Ga_{\geq 0}\simeq \Delta_{\Ga}$). For more on basic projective algebraic geometry  we refer to, e.g. \cite[Section 8]{Cox}.

Suppose that the following holds for the kinetics:
\begin{assumption}\label{ass1}
The transition intensity functions $\la_{\nu \to \nu\p}(\cdot)$ are linear polynomials in the reaction rate parameters $\k_{\nu \to \nu\p}$ for all reactions $\nu \to \nu\p\in\cR$. 
\end{assumption}
As an example of transition intensity functions satisfying Assumption \ref{ass1}, consider Mass-action kinetics as given in equation \eqref{int}, and the CTMC on a non-trivial irreducible component $\Ga$.
Then $ME(\cG,\Ga)$ is the system of equations given by
$$
  \sum_{\nu_i\to\nu_i\p\in\cR} \pi_\Ga(x+\nu_i-\nu_i\p)\la_i(x+\nu_i -\nu_i\p)=\pi_\Ga(x)\sum_{\nu_i\to\nu_i\p\in\cR}\la_i(x) ,\qquad x\in \Gamma .
$$

Note that the left hand side is not zero for any $x$ in $\Ga$. Taking everything to the left hand side we can write $ME(\cG,\Ga)$ as
$$\sum_{\nu_i\to\nu_i\p\in\cR} \pi_\Ga(x+\nu_i-\nu_i\p)\la_i(x+\nu_i -\nu_i\p)-\pi_\Ga(x)\sum_{\nu_i\to\nu_i\p\in\cR}\la_i(x)=0,\qquad x\in \Gamma \mbox{.}$$
We now express $ME(\cG,\Ga)$ in matrix form. Let $\Ga=\{c_1,c_2,\cdots, c_m\}$ be a (finite) arbitrary irreducible component with $m$ elements (see Example \ref{ex1} and \ref{ex1_} for examples of irreducible components of a CRN).
Then the system of equations $ME(\cG,\Ga)$ can be written as:\hfill\break
\begin{equation}
\begin{tikzpicture}[baseline=(current bounding box.center)]
\matrix (m) [matrix of math nodes,nodes in empty cells,right delimiter={]},left delimiter={[} ]{
-\sum_{\nu_i\to\nu_i\p\in\cR}\la_i(c_1)  &  &   &  &  &   \\
  & & & & &  \\
 & & & & &    \\
   & & & & &   \\
  & & & & &  \\
 & & &  &  & -\sum_{\nu_i\to\nu_i\p\in\cR}\la_i(c_m)\\
} ;
\draw[loosely dotted] (m-1-1)-- (m-6-6);
\draw[loosely dotted] (m-1-1)-- (m-1-6);
\draw[loosely dotted] (m-1-1)-- (m-6-1);
\end{tikzpicture} 
\begin{pmatrix}
    \pi_\Ga(c_1)\\\pi_\Ga(c_2)\\\vdots\\\pi_\Ga(c_m)
\end{pmatrix}=0
\label{eq:eqq1}
\end{equation}
where the entries outside of the diagonal are of the form $\sum_{c_k+\nu_i -\nu_i\p=c_j}\la_i(c_k+\nu_i -\nu_i\p)$ in the k-row and j-column for $j\neq k$. Note that, under Assumption \ref{ass1}, each $\la_i(c_k+\nu_i -\nu_i\p)$ is a homogeneous polynomial of degree $1$ in the rates $\k_i$.

We denote the corresponding matrix by $A(\k)$, so that \eqref{eq:eqq1} becomes $A(\k)\overline{\pi_{\Gamma}}=0$, where $\overline{\pi_{\Gamma}}=(\pi_\Ga(c_1),\ldots \pi_\Ga(c_m))$. See Example \ref{ex1_} for an example of the computation of $A(\k)$.
 \begin{lemma}\label{clm}  Consider $A(\k)$ as a matrix with entries in the field of fractions $\R(\k_i|{\nu_i\to\nu_i\p}\in\cR)$. Under Assumption \ref{ass1}, $\mathrm{rank}(A(\k))=m-1$\end{lemma}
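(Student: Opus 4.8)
The plan is to identify $A(\k)$ with the transpose of the infinitesimal generator of the CTMC restricted to the finite irreducible component $\Ga$, and then to bound $\mathrm{rank}(A(\k))$ separately from above and below over the field $\R(\k_i\mid \nu_i\to\nu_i\p\in\cR)$. For the upper bound I would sum the entries of $A(\k)$ down each column. The diagonal entry in column $j$ is $-\sum_i \la_i(c_j)$, the total exit rate from $c_j$, while the off-diagonal entries $A_{kj}$ with $k\neq j$ record the rate of the jump $c_j\to c_k$. Because $\Ga$ is an irreducible component, every reaction active at $c_j$ sends $c_j$ to a state that again lies in $\Ga$, and since reactions satisfy $\nu_i\neq\nu_i\p$ that target is never $c_j$ itself; hence the off-diagonal entries of column $j$ sum to exactly $\sum_i \la_i(c_j)$, and the whole column sums to $0$. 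Thus $\1^{\top}A(\k)=0$, so $\det A(\k)=0$ and $\mathrm{rank}(A(\k))\leq m-1$ already over $\R(\k)$.

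For the lower bound I would argue by specialisation. Under Assumption \ref{ass1}, every $(m-1)\times(m-1)$ minor of $A(\k)$ is a polynomial in the rates $\k_i$. Fix any numerical value $\k_0\in\R^\cR_{>0}$: by Remark \ref{rmk_pos_cons} the finite component $\Ga$ is positive recurrent, so it carries a unique stationary distribution and the numerical kernel of $A(\k_0)$ is exactly the line spanned by $\overline{\pi_\Ga}$. Consequently $\mathrm{rank}(A(\k_0))=m-1$, so at least one $(m-1)\times(m-1)$ minor is nonzero at $\k_0$. A polynomial that does not vanish at a point is not the zero polynomial, hence this minor is a nonzero element of $\R(\k)$, giving $\mathrm{rank}(A(\k))\geq m-1$. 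Combining the two bounds yields $\mathrm{rank}(A(\k))=m-1$.

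The only genuinely delicate point is the passage from the pointwise statement ``$\mathrm{rank}(A(\k_0))=m-1$ for each positive $\k_0$'' to the generic statement ``$\mathrm{rank}_{\R(\k)}(A(\k))=m-1$''; this is exactly the observation that the rank over the function field is detected by the non-vanishing of a single minor \emph{as a polynomial}, for which one witnessing point suffices (the rank can only drop on the proper algebraic subset where that minor vanishes). A purely algebraic alternative avoiding any probabilistic input would be to invoke the Markov-chain tree (Matrix--Tree) theorem: the relevant $(m-1)$-cofactor equals a sum, over spanning in-arborescences of the transition graph of $\Ga$ rooted at a fixed state, of products of the transition rates. Irreducibility of $\Ga$ guarantees at least one such arborescence built from genuine reactions, so this cofactor is a nonzero polynomial with positive coefficients, again forcing $\mathrm{rank}(A(\k))\geq m-1$. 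Either route, combined with the column-sum upper bound, completes the proof. I expect the column-sum verification (checking that active reactions keep the chain inside $\Ga$ and never fix the state) and the pointwise-to-generic transfer to be the two places most in need of care, though neither is deep.
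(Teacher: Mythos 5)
Your proof is correct, and its delicate half coincides exactly with the paper's own argument: the lower bound $\mathrm{rank}(A(\k))\geq m-1$ is obtained in the paper precisely by your specialisation step (at any fixed $a\in\R^\cR_{>0}$ the finite irreducible component is positive recurrent with a unique stationary distribution, so $\mathrm{rank}(A(a))=m-1$, some $(m-1)$-minor is nonzero at $a$, hence is a nonzero polynomial, hence nonzero in $\R(\k_i|\nu_i\to\nu_i\p\in\cR)$). Where you differ is the upper bound: the paper also argues pointwise, noting that $\det A(a)=0$ for \emph{every} $a\in\R^\cR_{>0}$ (again by Markov chain theory) and then concluding that $\det A(\k)$, being a polynomial vanishing on the open positive orthant, is identically zero; you instead verify the structural identity $\1^{\top}A(\k)=0$ directly, using that an active reaction at $c_j\in\Ga$ has its target in $\Ga$ and distinct from $c_j$, so each column sums to zero as a polynomial. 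Both are sound; your version is more self-contained (no appeal to a polynomial vanishing on an open set, and it exhibits the left kernel explicitly), while the paper's has the economy of invoking the same Markov-theoretic fact (one-dimensionality of the kernel at positive rates) for both bounds at once. Your Matrix--Tree remark is a genuine further alternative the paper does not pursue: it replaces all probabilistic input in the lower bound by a combinatorial one and shows, incidentally, that the witnessing cofactor is a polynomial with positive coefficients, which is consonant with what the paper later proves about the kernel vector in Corollary \ref{clm2}.
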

\begin{proof} For any $a\in \R^\cR_{> 0}$, the kernel of $A(a)$ is 1-dimensional by Markov chain theory (see Remark \ref{rmk_pos_cons}). The determinant $det(A(\k))$ is an element of $\R[\k_i|{\nu_i\to\nu_i\p}\in\cR]$,  which vanishes for all $a\in \R^\cR_{> 0}$. So it is identically zero as a polynomial in $\R[\k_i|{\nu_i\to\nu_i\p}\in\cR]$.\hfill\break
For any $a\in \R^\cR_{> 0}$ there is some $m-1$-minor of $A(a)$ which does not vanish. Hence some $m-1$-minors of $A(\k)$ are nonzero,
and we conclude that $A(\k)$ has rank $m-1$ over $\R(\k_i|{\nu_i\to\nu_i\p}\in\cR)$.
\end{proof}

\begin{corollary}\label{clm2}
Under Assumption \ref{ass1}, the map $h_\Ga$ can be given as a unique vector $(F_1,\cdots , F_m)$, up to multiplication by a constant in $\R^*$, whose entries are homogeneous polynomials in $\R[\k_i|{\nu_i\to\nu_i\p}\in\cR]$ of the same degree, with positive coefficients and no non-constant common divisor.  Hence the $F_i$ never vanish on $\R\Pr_{>0}^\cR$. 
\end{corollary}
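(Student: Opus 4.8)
The plan is to produce the vector $(F_1,\ldots ,F_m)$ explicitly from the adjugate of $A(\k)$ and then normalise it. By Lemma \ref{clm}, over the field $\R(\k_i\mid \nu_i\to\nu_i\p\in\cR)$ the matrix $A(\k)$ has rank $m-1$, so $\det A(\k)=0$ and its kernel is a one-dimensional $\R(\k)$-subspace. First I would exploit the identity $A(\k)\cdot\mathrm{adj}(A(\k))=\det(A(\k))I=0$: since the rank is exactly $m-1$, some $(m-1)$-minor is nonzero, so $\mathrm{adj}(A(\k))\neq 0$ and in fact has rank $1$, whence any nonzero column of $\mathrm{adj}(A(\k))$ spans the kernel. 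Each entry of such a column is a signed $(m-1)\times(m-1)$ minor of $A(\k)$; under Assumption \ref{ass1} every entry of $A(\k)$ is homogeneous of degree $1$ in the rates, so each minor is homogeneous of degree $m-1$. This already yields a polynomial kernel generator $(v_1,\ldots ,v_m)$ whose entries are homogeneous of the same degree $m-1$.

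Next I would pin down the signs and positivity of the coefficients by appealing to the combinatorial description of these cofactors given by the Markov chain tree theorem (the matrix--tree theorem for the generator $A(\k)=Q^\top$ restricted to $\Ga$): up to a common sign, $v_j$ equals the sum, over all spanning arborescences of the transition graph on $\Ga$ rooted at $c_j$, of the product of the transition intensities along the tree's edges. Under Assumption \ref{ass1} each such intensity is a linear form in the $\k_i$ with nonnegative coefficients, so each product is homogeneous of degree $m-1$ with nonnegative coefficients, and $v_j$ inherits nonnegative coefficients. Because $\Ga$ is a finite, positive recurrent irreducible component (Remark \ref{rmk_pos_cons}), at least one arborescence rooted at each $c_j$ exists, so every $v_j$ is a nonzero polynomial all of whose occurring coefficients are positive; in particular $v_j$ evaluates to a strictly positive number at every $\k\in\R^\cR_{>0}$, matching the defining property that $h_\Ga$ lands in $\R^\Ga_{>0}$.

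Finally I would normalise and establish uniqueness. Working in the UFD $\R[\k_i\mid \nu_i\to\nu_i\p\in\cR]$, let $g=\gcd(v_1,\ldots ,v_m)$ (a homogeneous polynomial) and set $F_j:=v_j/g$; then $(F_1,\ldots ,F_m)$ spans the same one-dimensional kernel, its entries are homogeneous of the same degree, and they share no non-constant common divisor. Uniqueness up to $\R^*$ is the standard coprimality argument: any second coprime polynomial generator is an $\R(\k)$-multiple $p/q$ of $(F_1,\ldots ,F_m)$, and clearing denominators together with coprimality forces $p/q\in\R^*$. For the non-vanishing conclusion I would argue that $g$ cannot vanish anywhere on $\R^\cR_{>0}$ (otherwise the strictly positive $v_j$ would vanish there), so, being continuous and zero-free on the connected set $\R^\cR_{>0}$, it has constant sign; choosing that sign positive gives $F_j>0$ on $\R^\cR_{>0}$, and homogeneity transports this to $\R\Pr^\cR_{>0}$.

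The main obstacle I anticipate is exactly this last normalisation step: dividing out the common factor $g$ can introduce negative coefficients into the $F_j$ (e.g. a factor $\k_1+\k_2$ divided out of $\k_1^3+\k_2^3$), so ``positive coefficients'' need not literally survive coprime reduction. The robust statement to carry through is therefore positivity of \emph{values} on the open orthant $\R^\cR_{>0}$ rather than of coefficients: the matrix--tree representative $(v_1,\ldots ,v_m)$ supplies genuinely nonnegative coefficients, while the zero-free-sign argument for $g$ guarantees that the reduced, coprime $(F_1,\ldots ,F_m)$ still never vanishes on $\R\Pr^\cR_{>0}$. I would flag this distinction explicitly and, if literal coefficient-positivity of the coprime representative is needed later, check whether in the cases of interest $g$ is constant so that no reduction is required.
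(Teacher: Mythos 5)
Your proof is correct, but it takes a genuinely different route from the paper. The paper proves the corollary constructively by Gaussian elimination over the fraction field $\R(\k_i|{\nu_i\to\nu_i\p}\in\cR)$: it column-reduces the row-augmented matrix $\left[\begin{smallmatrix}A(\k)\\ I_m\end{smallmatrix}\right]$, observes that only columns scaled by elements with positive coefficients are ever added, reads off a kernel vector from the last column of the bottom block, and then divides by the GCD. You instead produce the kernel vector in closed form as a column of $\mathrm{adj}(A(\k))$ and invoke the Markov chain tree theorem to identify its entries as arborescence sums, which are manifestly homogeneous of degree $m-1$ with nonnegative coefficients and strictly positive on $\R^\cR_{>0}$; the GCD reduction and the coprimality argument for uniqueness then proceed as in the paper. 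Your route buys an explicit combinatorial formula and a transparent reason for positivity (existence of arborescences at every root, by irreducibility of $\Ga$), at the price of invoking the matrix--tree machinery; the paper's route is more elementary and algorithmic (it is what one actually runs in Maple in Section \ref{subs_examples}), but its positivity bookkeeping is tied to the elimination steps. Notably, the caveat you flag at the end is real and applies to the paper's own proof as well: the paper asserts that the $F_i$ have positive coefficients ``by construction'' after dividing by the GCD, but coefficient positivity is not in general preserved under coprime reduction (your example $(\k_1^3+\k_2^3)/(\k_1+\k_2)$ makes the point). Your substitute --- strict positivity of \emph{values} on $\R^\cR_{>0}$, obtained because $v_j=F_jg$ with $v_j>0$ there forces $g$ and hence each $F_j$ to be zero-free of constant sign on the connected orthant --- is exactly what is needed for the final claim that the $F_i$ never vanish on $\R\Pr_{>0}^\cR$, and is the sound way to state the conclusion that the rest of the paper actually uses.
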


\begin{proof}
For the ease of the reader we complete the computation of $h_\Ga$ by recalling the computation of the kernel of $A(\k)$ (which is one-dimensional by Lemma \ref{clm}) via Gaussian elimination over the field of fractions $\R(\k_i|{\nu_i\to\nu_i\p}\in\cR)$. This gives a constructive proof for $h_\Ga$ and $(F_1,\cdots,F_m)$. We can proceed as follows:
$$(0)\quad \left[
    	\begin{array}{cc|cc}
    	A(\k) \\
    	\hline
    	I_m \\	
    	\end{array}
    	\right]\xRightarrow{(1)\&(2)}
	\left[
    	\begin{array}{cc|cc}
    	B \\
    	\hline
    	C \\	
    	\end{array}
    	\right]\quad (3),	$$

\begin{enumerate}
\setcounter{enumi}{-1}
\item We start by creating the row-augmented matrix, where $I_m$ is the $m\times m$ identity matrix.
\item We divide each j-column by $-a_{j,j}=\sum_{\nu_i\to\nu_i\p\in\cR}\la_i(x_j)$. Hence we get $-1$-entries in the diagonal of the former matrix $A(\k)$. 
\item We apply elementary column operations to get the upper matrix in column echelon form. Note that we only add columns multiplied by elements in $\R(\k_i|{\nu_i\to\nu_i\p}\in\cR)$ with all coefficients positive.
\item We get the matrices $B,C$, where $B$ is in column echelon form of degree $m-1$. Furthermore  matrix $C$ has the form $$C=\begin{pmatrix}
c_{1,1}\,\cdots\, c_{1,m-1}\, v_1\\
\vdots\,\ddots\,\quad\,\vdots\,\,\,\,\,\,\,\,\,\,\,\,\,\,\,\,\vdots\\
c_{m,1}\cdots\, c_{m,m-1}\, v_m
\end{pmatrix}$$
	such that the last column of $C$, i.e. $(v_1,\cdots, v_m)$, is a non-trivial vector of the kernel.
\end{enumerate}
Hence $(v_1,\cdots, v_m)$ is a basis of the kernel over $\R(\k_i|{\nu_i\to\nu_i\p}\in\cR)$, and we can choose the corresponding column vector as homogeneous polynomials. If the greatest common divisor(GCD) in $\R[\k_i|{\nu_i\to\nu_i\p}\in\cR]$ is nontrivial we divide all polynomials of the column vector by the GCD. Denote the corresponding vector by $w$ with entries in $\R[\k_i|{\nu_i\to\nu_i\p}\in\cR]$, which is unique up to multiplication by a constant in $\R^*$.
 Then the vector $w$ has the form $w=(F_1,\cdots ,F_m)$, and by construction the $F_1,\cdots ,F_m$ are homogeneous polynomials of the same degree with positive coefficients by (2). As all the coefficients of the polynomials are positive, it is clear that $F_1,\cdots, F_m$ never vanish on $\R\Pr_{>0}^\cR$.
\end{proof}

Using the former construction, we can define a map $h_{\Gamma}^*$ between two polynomial rings, namely the ring of polynomials whose variables are $\pi (x)$, $x\in \Gamma$, and the ring containing polynomials with variables $\k _i$ encoding the reaction rates:
$$\mathbb{R}[\pi (x):x\in \Gamma]\stackrel{h_{\Gamma }^*}{\longrightarrow}\mathbb{R}[\k_i|{\nu_i\to\nu_i\p}\in\cR].$$  
This map sends $\pi(x_j)$ to $F_j(\k)$, and is a ring homomorphism (a map preserving addition and multiplication). Its geometric counterpart is the following map of algebraic sets:
$$\k\in \R^\cR\longmapsto h_{\Gamma }(\k)=(F_1(\k),\cdots,F_m(\k))\in \R^\Ga$$
 which extends the first part of the map \eqref{eq:map_h_G} to $\R^\cR$. We use the same name for this new map as for its restriction to $\R^\cR_{>0}$. By Corollary \ref{clm2}, the image of $\R_{>0}^\cR$ by this map is contained in $\R_{>0}^\Ga$.

  We will use this map later in this section.

\vspace{0.2cm}

From now on we allow the following abuse of notations. We will write $\pi$ for the stationary distribution without specifying the irreducible component, $\pi_\Ga$ when specifying it to $\Ga$, $\pi_x$ for the value of the stationary distribution on $\Ga$ with coordinate $x\in \Ga$, $h_\Ga(\k)$ for the vector of polynomials from Corollary \ref{clm2} on $\Ga$ and $h_x(\k)$ for the corresponding polynomial of $h_\Ga(\k)$ in coordinate $x\in \Ga$. Furthermore, if the context is clear and we consider a sequence of irreducible components $\Ga_l$ for $l\in \I$ along $\I$ an index set, we might write $\pi_l$ and $h_l(\k)$ for $\pi_{\Ga_l}$ resp. $h_{\Ga_l}(\k)$. 

\begin{example}\label{ex1_}
Consider Example \ref{ex1} with Mass-action kinetics. The Master equation is given as
$$\a\pi(x_1-2,x_2+2)(x_2+2)(x_2+1)\1_{\{x_1\geq 2\}}+\be\pi(x_1+1,x_2-1)(x_1+1)\1_{\{x_2\geq 1\}}$$ 
$$=\pi(x_1,x_2)(\a x_2(x_2-1)\1_{\{x_2\geq 2\}}+\be x_1\1_{\{x_1\geq 1\}})$$
We restrict our treatment to the first three positive irreducible components $\Ga_2,\Ga_3,\Ga_4$, where we write the defining equations in matrix form:

\begin{itemize}

\item $ME(\cG,\Ga_2)$ is defined by:\hfill\break
\begin{equation*}
\begin{pmatrix}
-2\be  &0  &   2\a   \\
  2\be& -\be& 0  \\
0&\be&-2\a    
\end{pmatrix}
\begin{pmatrix}
    \pi_{20}\\\pi_{11}\\\pi_{02}
\end{pmatrix}=0
\end{equation*}

\item
$ME(\cG,\Ga_3)$ is defined by:\hfill\break
\begin{equation*}
\begin{pmatrix}
-3\be  &0  &   2\a &0  \\
  3\be& -2\be& 0&6\a  \\
0&2\be&-(2\a  +\be)&0\\
0&0&\be&-6\a 
\end{pmatrix}
\begin{pmatrix}
    \pi_{30}\\\pi_{21}\\\pi_{12}\\\pi_{03}
\end{pmatrix}=0
\end{equation*}

\item
$ME(\cG,\Ga_4)$ is defined by:\hfill\break
\begin{equation*}
\begin{pmatrix}
-4\be  &0  &   2\a &0&0  \\
  4\be& -3\be& 0&6\a &0 \\
0&3\be&-(2\a  +2\be)&0&12\a\\
0&0&2\be&-(6\a+\be)&0\\
 0&0&0&\be&-12\be&\\
\end{pmatrix}
\begin{pmatrix}
    \pi_{40}\\\pi_{31}\\\pi_{22}\\\pi_{13}\\\pi_{04}
\end{pmatrix}=0
\end{equation*}

\end{itemize}
By Lemma \ref{clm} the kernels $A(\k)$ over $\R(\k_i|{\nu_i\to\nu_i\p}\in\cR)$ are one-dimensional. They can be computed, e.g. using Maple, cf. Section \ref{subs_examples}. The entries of a vector in $\mathrm{ker}(A(\k))$ are polynomials in $\R[\k_i|{\nu_i\to\nu_i\p}\in\cR]$ with positive coefficients and, possibly after dividing by the GCD (as in the proof of Corollary \ref{clm2}), they correspond to the entries of the polynomial map $h_{\Ga_l}$, as computed in Corollary \ref{clm2}. In this case, these vectors are:
\begin{itemize}
\item
$h_{\Ga_2}(\a,\be)=(\a,2\a,\be)$
\item $h_{\Ga_3}(\a,\be)=(4\a^2,(2\a+\be)3\a,6\a\be,\be^2)$
\item $h_{\Ga_4}(\a,\be)=(3\a^2(6\a+\be),24\a^3+28\a^2\be,6\a\be(6\a+\be),12\a\be^2,\be^3)$
\end{itemize}

\end{example}

\subsection{Algebraic relations for product-form stationary distributions}\label{app_prod_fn}
Consider an indexed set of pairwise disjoint subsets $\Ga_l$ of $\Z_{\geq 0}^n$ corresponding to irreducible components with stationary distributions $\pi_l\in \mathring\Delta_{\Ga_l}$ coming from a CRN $(\mathcal{G}, \k)$. According to Definition \ref{prod_f}, $(\mathcal{G}, \k)$ with (indexed) positive recurrent irreducible components and stationary distributions $(\Ga_l,\pi_l)_{l\in \mathcal{I}}$ has product-form stationary distribution if there are functions $f_i: \N\to \R_{>0}; S_i\in \cS$ such that for any subset $\Ga_l$ the distribution has the form
$$\pi_{\Ga_l}(x)=Z_{\Ga_l}^{-1}\prod_{S_i\in\cS}^{ } f_i(x_i),\quad Z_{\Ga_l}=\sum_{x\in \Ga_l}\prod_{S_i\in\cS}^{ } f_i(x_i).$$ 

The next result follows from combining Lemma \ref{Lemma_nec} and Proposition \ref{suff_cond}, which are given in Section \ref{app_prod_f}:
\begin{theorem}\label{suff_condn}  Let $(\mathcal{G}, \k)$ be a CRN with positive recurrent irreducible components and stationary distributions $(\Ga_l,\pi_l)_{l\in \mathcal{I}}$ with $\pi_l\in \mathring\Delta_{\Ga_l}$, where\\
$\Ga_l=\{x\in \Z^\cS_{\geq 0}|\sum_{S_i\in \cS} x_i=l\}$ with index set 
$\mathcal{I}=\Z_{\geq 1}$. Then the following are equivalent:
\begin{enumerate}
\item $(\mathcal{G}, \k)$ has product-form
\item The following non-linear relations are satisfied by the distributions:\\
(a) For all $x,x+e_j-e_k\in \Ga_i$ and $y,y+e_j-e_k\in \Ga_{i+1}$, $i\in \mathcal{I}$  with $x_j=y_j,x_k=y_k$ we have
$$\pi_{i}(x+e_j-e_k)\pi_{i+1}(y)=\pi_{i+1}(y+e_j-e_k)\pi_i(x)   $$
(b) For all $x,z\in \Ga_i;\quad x+e_j,y,z+e_k,w\in \Ga_{i+1}$ and $y+e_j,w+e_k\in \Ga_{i+2}$, $i\in \mathcal{I}$, with $x_j=y_j$ and $z_k=w_k$ we have
$$\pi_{i+1}(x+e_j)\pi_{i+1}(y)\pi_{i+2}(w+e_k)\pi_i(z)=\pi_{i+1}(z+e_k)\pi_{i+1}(w)\pi_{i+2}(y+e_j)\pi_i(x)$$
\end{enumerate}
 Moreover, if $\mathcal{I}=\Z_{\geq \intval}$ for some $\intval\geq 2$, then $(1)\Rightarrow (2)$ still holds. 
\end{theorem}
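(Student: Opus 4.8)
The plan is to prove the two implications of the equivalence separately, as a necessity part $(1)\Rightarrow(2)$ and a sufficiency part $(2)\Rightarrow(1)$; the closing ``moreover'' statement will then be immediate, since it re-uses only the necessity direction, which is insensitive to where the index set $\mathcal I$ begins. Throughout I assume $n=|\cS|\geq 2$, as otherwise every $\Ga_l$ is a singleton and product-form is vacuous.

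For $(1)\Rightarrow(2)$ I would substitute the assumed product form $\pi_l(x)=Z_{\Ga_l}^{-1}\prod_{S_i\in\cS}f_i(x_i)$ into each side of (a) and (b) and cancel. In (a) both sides carry the same normalising factor $Z_{\Ga_i}^{-1}Z_{\Ga_{i+1}}^{-1}$ and the same one-variable factors $f_m$ in all coordinates $m\neq j,k$; the hypotheses $x_j=y_j$ and $x_k=y_k$ force the surviving factors $f_j(x_j+1),f_k(x_k-1),f_j(x_j),f_k(x_k)$ to match term by term, so the identity holds. In (b) the bookkeeping is longer but identical in spirit: after cancelling the common normalisation $Z_{\Ga_i}^{-1}Z_{\Ga_{i+1}}^{-2}Z_{\Ga_{i+2}}^{-1}$ and every factor in coordinates $\neq j,k$, the identity collapses to $f_j(x_j+1)f_j(y_j)=f_j(x_j)f_j(y_j+1)$ in coordinate $j$ and $f_k(z_k)f_k(w_k+1)=f_k(z_k+1)f_k(w_k)$ in coordinate $k$, each of which holds because $x_j=y_j$ and $z_k=w_k$. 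Since this argument only plugs the product form into the relations, it does not care whether $\mathcal I=\Z_{\geq 1}$ or $\mathcal I=\Z_{\geq \intval}$ with $\intval\geq 2$, which is exactly the ``moreover'' claim.

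The substance is $(2)\Rightarrow(1)$, where I must manufacture a single family $(f_i)_{S_i\in\cS}$ out of the numbers $\{\pi_l(x)\}$. After normalising $f_i(0)=1$, I would induct on $l$. The base is $\Ga_1=\{e_1,\dots,e_n\}$, where I set $f_i(1):=\pi_1(e_i)/\pi_1(e_1)$ (well-defined since $\pi_l\in\mathring\Delta_{\Ga_l}$ is strictly positive). At step $l$ I first fix $Z_{\Ga_l}$ from one anchor state with already-known coordinates, e.g. $Z_{\Ga_l}:=f_1(l-1)f_2(1)/\pi_l((l-1)e_1+e_2)$, then define the new top values $f_i(l):=Z_{\Ga_l}\,\pi_l(l\,e_i)$ from the ``pure'' states $l\,e_i$. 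The real content is to verify that $\pi_l(x)=Z_{\Ga_l}^{-1}\prod_i f_i(x_i)$ then holds at \emph{every} $x\in\Ga_l$, not merely at the anchors. For this I would connect an arbitrary $x$ to an anchor by a path of swaps $x'\mapsto x'+e_j-e_k$ and use relation (a) to transport each swap ratio $\pi_l(x'+e_j-e_k)/\pi_l(x')$ down to the already-verified component $\Ga_{l-1}$; this is legitimate precisely when a state of $\Ga_{l-1}$ shares the coordinates $(x'_j,x'_k)$, i.e. when $x'_j+x'_k\leq l-1$, so it covers every state except those supported on two species whose counts sum to $l$.

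The hard part — and the reason the induction must start at $l=1$ rather than at some $\intval\geq 2$ — is exactly these two-species ``corner'' states $a\,e_j+b\,e_k$ with $a+b=l$ and $a,b\geq 1$, which relation (a) cannot reach by coordinate-matching into $\Ga_{l-1}$. I expect relation (b) to be the tool that closes this gap. Rewritten in terms of the one-particle transfer ratios $T_{i\to i+1}(x,j):=\pi_{i+1}(x+e_j)/\pi_i(x)$, relation (b) reads $T_{i\to i+1}(x,j)\,T_{i+1\to i+2}(w,k)=T_{i\to i+1}(z,k)\,T_{i+1\to i+2}(y,j)$ under $x_j=y_j$, $z_k=w_k$; this is a commutativity (cocycle) statement saying that adding a species-$j$ and a species-$k$ particle is order-independent in the relevant sense, and that path-independence is what forces the corner values to agree with the product-form prediction and makes the constructed $f_i$ globally well-defined. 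Establishing this gluing rigorously, and checking that the smallest components $\Ga_1,\Ga_2$ genuinely anchor every corner from below (so that the climb via relation (b) can be carried out for all $l$), is where the work concentrates. Once path-independence is secured, the global product form and the constants $Z_{\Ga_l}$ drop out, and the observation after Definition \ref{prod_f} that product-form is the all-singletons case of $\I$-product-form fixes any residual normalisation.
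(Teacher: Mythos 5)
Your necessity direction $(1)\Rightarrow(2)$, together with the observation that it alone yields the final ``moreover'' claim, is complete and correct, and it coincides with the paper's treatment (Lemma \ref{Lemma_nec} and Remark \ref{eq_lem}). The scaffolding of your sufficiency direction is also the paper's own: induct on $l$ starting from $l=1$, set $f_i(0)=1$ and $f_i(1)$ from $\pi_1$, define $f_i(l):=Z_{\Gamma_l}\pi_l(le_i)$ from the pure states, use relation (a) to control states with support outside $\{j,k\}$, and recognise that the two-species ``corner'' states $ae_j+be_k$, $a+b=l$, are exactly what (a) cannot reach, so that (b) must supply a path-independence (cocycle) statement. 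All of this matches Proposition \ref{suff_cond}.

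The problem is that your proposal stops precisely where the mathematical content of $(2)\Rightarrow(1)$ begins: ``I expect relation (b) to be the tool that closes this gap'' and ``establishing this gluing rigorously \dots is where the work concentrates'' announce the decisive step instead of proving it. The missing argument is the consistency statement that the paper makes precise as well-definedness of the normalising constant: defining
$$Z_{\Gamma_l}=Z_{\Gamma_{l-1}}\,\frac{\pi_{l-1}(x)}{\pi_l(x+e_k)}\,\frac{f_k(x_k+1)}{f_k(x_k)}$$
for a one-particle addition ($x\in\Gamma_{l-1}$, $x\neq(l-1)e_k$), one must show the value does not depend on the choice of $(x,k)$. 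The paper proves this by a two-step induction: for $l=2$ the independence is exactly an instance of relation (a) applied to $\Gamma_1,\Gamma_2$; for the step to level $m+2$, one takes two candidate choices $(y,j)$ and $(w,k)$ in $\Gamma_{m+1}$, picks $x,z\in\Gamma_m$ with $x_j=y_j$ and $z_k=w_k$, uses the induction hypothesis (well-definedness of $Z_{\Gamma_{m+1}}$) to get an identity among ratios at levels $m,m+1$ and the already-constructed $f_j,f_k$, and then relation (b) is precisely what transports that identity up to levels $m+1,m+2$. Once $Z_{\Gamma_l}$ is known to be choice-independent, the product form at \emph{every} state of $\Gamma_l$ (corners and pure states included, since one-particle additions reach them, unlike your swaps) follows immediately from the induction hypothesis on $\Gamma_{l-1}$; your single-anchor definition of $Z_{\Gamma_l}$ hides exactly this verification rather than avoiding it. So the strategy is sound and agrees with the paper, but as written the sufficiency half is a plan with its central lemma unproved, not a proof.
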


 Theorem \ref{suff_condn} gives equivalent conditions for the sequence of probability distributions $(\pi _{\Gamma _l}:l\in \mathcal{I})$ to have product-form. These conditions are expressed as relations between the values of the distribution on different elements of the disjoint subsets of $\Z_{\geq 0}^n$. However, only under strict assumptions on the decomposition of $\Z_{\geq 0}^n$ the conditions from (2) are equivalent to product-form. For more general decompositions of the state space (i.e. into irreducible components), analogous algebraic relations often exist, but they give only necessary conditions for product-form stationary distributions. Likewise, algebraic conditions for $\I$-product-form stationary distributions might be worked out.

\subsection{The semi-algebraic set of product-form stationary distributions}\label{semi}
In this part we use the compatibility conditions for product-form stationary distributions from Section \ref{app_prod_fn} in combination with the homogeneous polynomials from the polynomial vectors $h_{\Ga_i}(\k)$ in Section \ref{h} representing the stationary distributions. The fist are conditions on the values taken by $\pi _{\Gamma}(x)$ at each $x\in\Gamma$, while the latter are expressions for these $\pi _{\Gamma }(x)$ in terms of the $\k$. Combining these leads to algebraic conditions expressing the compatibility of product-form stationary distribution, now  on the reaction rates. Correspondingly, we define an ideal expressing all these compatibility conditions, together with the set of possible values of the rate parameters which satisfy these conditions. This set will be a semi-algebraic set (see Section \ref{sec:alg_geom}), given by the intersection of the positive orthant of the rate space $\R^\cR_{>0}$ and the real algebraic set defined by the real zeros of the mentioned ideals.

Consider a CRN $\cG$ with given kinetics under Assumption \ref{ass1}, and the indexed set $(\Ga_l,h_l(\k))_{l\in \mathcal{I}}$ given by the decomposition of the state space into irreducible components $\Ga_l$ with unnormalised stationary distributions $h_l(\k)$ (parametrised by polynomials, see Section \ref{h}). We assume now that the irreducible components are of the form
$\Ga_l=\{x\in \Z^\cS_{\geq 0}|\sum_{S_i\in \cS} x_i=l\}$, with  $\mathcal{I}=\Z_{\geq \intval}$. In other words, the irreducible components consist of the integer points of $l$-dilations of the simplex for all $l\geq\intval$, where $\intval\in\Z_{\geq 1}$ is an arbitrary positive integer (that depends on the CRN under consideration).   
Further, recall that the $h_l(\k)$ are polynomials in the variables $\k_i$ as proven in Proposition \ref{clm2}. 
\begin{definition} \label{product_ideal}\hfill\break Consider for each $j\in \mathcal{I}$ the ideal $J_j\subset \R[\pi _{\Ga_i}(x):x\in \Ga_i, \intval\leq i\leq j]$ defined as
\small
\begin{align*}
J_j=&\langle \text{homogeneous polynomials from Theorem \ref{suff_condn} (a), (b) for }\Ga_i:\intval\leq i\leq j,\rangle \mbox{,}\end{align*}
and the ideal

$$J=\left\langle \text{all homogeneous polynomials from Theorem \ref{suff_condn} (a), (b) for all } i \geq \intval\right\rangle$$

\normalsize
We denote by $I_j$, for $j\in \mathcal{I}$, and $I$, respectively, their images in $\R[\k_i|\nu_i\to \nu_i\p\in\cR ]$ via $h_{\Ga }^*$ . 
Correspondingly we have the following zero sets:\hfill\break
- The affine algebraic set: 
$$V_\cG:=\mathbb{V}(I)=\{\k\in \R^\cR|f(\k)=0 \, \forall f\in I\}$$  
- The affine semi-algebraic set:
$$V_{\cG,>0}:=\mathbb{V}(I)_{>0}=\mathbb{V}(I)\cap\R^\cR_{> 0}$$
\end{definition}
\begin{remark}
Note that by construction the corresponding ideals $I_j,I$ are then simply generated by the relations from Theorem \ref{suff_condn} (a), (b) but with $\pi _{\Ga_i}(x)$ replaced by the polynomial in the $x$-coordinate of the vector from the map $h_{\Ga_i}(\k)$.
\end{remark}

\begin{remark}\label{suff_rem_} 
\item We have $I_j\subseteq I_{j+1}\subseteq I$, hence
  $\mathbb{V}(I)\subseteq \mathbb{V}(I_{j+1})\subseteq \mathbb{V}(I_{j})$ for all  $j\geq \intval$. Similarly, $J_j\subseteq J_{j+1}\subseteq J$, so
  $\mathbb{V}(J)\subseteq \mathbb{V}(J_{j+1})\subseteq \mathbb{V}(J_{j})$ for all  $j\geq \intval$.
	\end{remark}
	
	\begin{remark}
\item The ideal $I$ is  a polynomial ideal in the ring $\R[\k_i|\nu_i\to \nu_i\p\in\cR ]$, hence $I$ has a finite set of generators (see Lemma \ref{Lemma:noet}). Equivalently, there exists some $N\geq \intval$ such that $I_{\intval}\subseteq \ldots \subseteq I_N=I_{N+1}=\ldots =I$.

\end{remark}

 Combining Definition \ref{product_ideal} with Theorem \ref{suff_condn} gives the following result:
\begin{theorem}\label{semi_alg}
 Let $\cG$ be a conservative CRN with kinetics satisfying Assumption \ref{ass1} and with irreducible components of the form
$\Ga_l=\{x\in \Z^\cS_{\geq 0}|\sum_{S_i\in \cS} x_i=l\}$ for the index set $\mathcal{I}=\Z_{\geq 1}$. The following statements are equivalent for any $a\in \R^\cR_{>0}$
\begin{enumerate}
    \item $(\cG,a)$ has product form stationary distribution
    \item $a\in \mathbb{V}(I)$
\end{enumerate}
Moreover, if $\mathcal{I}=\Z_{\geq \intval}$ for some $\intval\geq 2$, then $(1)\Rightarrow (2)$ still holds. 

\begin{proof}
$(1)\implies (2)$: If $a\in \R^\cR_{>0}$ is such that $(\cG,a)$ has product-form stationary distribution, any relation fromLemma \ref{Lemma_nec} (2) is satisfied by the images of the parametrised stationary distributions via $h_\Ga$ by definition. Hence $a\in \mathbb{V}(I)$. Note that if $\mathcal{I}=\Z_{\geq \intval}$ for some $\intval\geq 2$, then the implication remains valid as the corresponding implication from Lemma \ref{Lemma_nec} (1) $\implies $ (2) still holds.\hfill\break
$(2)\implies (1)$ follows similarly from the corresponding implication of Lemma \ref{Lemma_nec}
\end{proof}
\end{theorem}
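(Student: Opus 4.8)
The plan is to reduce the whole statement to Theorem \ref{suff_condn} by feeding in the polynomial parametrisation of the stationary distribution from Corollary \ref{clm2}. The bridge between the two is a homogeneity observation: the defining relations (a) and (b) of Theorem \ref{suff_condn} are invariant under rescaling each $\pi_{\Ga_i}$ by a positive constant. Indeed, in relation (a) each side is a product of one factor indexed by $\Ga_i$ and one by $\Ga_{i+1}$, while in relation (b) each side is a product of two factors from $\Ga_{i+1}$, one from $\Ga_i$ and one from $\Ga_{i+2}$; so under $\pi_{\Ga_i}\mapsto c_i\,\pi_{\Ga_i}$ both sides of each relation scale by the same monomial in the $c_i$. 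Consequently, asking whether these relations hold for the normalised distributions $\pi_{\Ga_i}$ is exactly the same as asking whether they hold for the unnormalised polynomial vectors $h_{\Ga_i}(\k)$.

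First I would prove $(1)\Rightarrow(2)$. Assuming $(\cG,a)$ has product-form, Theorem \ref{suff_condn} (the implication $(1)\Rightarrow(2)$, valid both for $\I=\Z_{\geq 1}$ and for $\I=\Z_{\geq\intval}$ with $\intval\geq 2$) gives that the distributions $(\pi_{\Ga_i})_{i\in\I}$ satisfy relations (a) and (b). At the rate $a$, the map \eqref{eq:map_h_G} and Corollary \ref{clm2} identify $\pi_{\Ga_i}=Z_{\Ga_i}^{-1}h_{\Ga_i}(a)$ with all entries strictly positive. By the scale-invariance above, relations (a) and (b) then hold verbatim with each $\pi_{\Ga_i}(x)$ replaced by $h_x(a)$. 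Since $I$ is by construction generated by precisely these relations after the substitution $\pi_{\Ga_i}(x)\mapsto h_x(\k)$ via $h_{\Ga}^*$, this says exactly that every generator of $I$ vanishes at $a$, i.e. $a\in\mathbb{V}(I)$.

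For the converse $(2)\Rightarrow(1)$ I would run the same argument in reverse, now using the full equivalence of Theorem \ref{suff_condn} that is available when $\I=\Z_{\geq 1}$. If $a\in\mathbb{V}(I)$, then all generators of $I$ vanish at $a$, so relations (a), (b) hold with $\pi_{\Ga_i}(x)$ replaced by $h_x(a)$. Corollary \ref{clm2} ensures $h_x(a)>0$ for every $x$ and every component, so normalising produces genuine distributions $\pi_{\Ga_i}\in\mathring\Delta_{\Ga_i}$, and by scale-invariance these again satisfy (a) and (b). Theorem \ref{suff_condn} in the direction $(2)\Rightarrow(1)$ then yields product-form for $(\cG,a)$, closing the equivalence. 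The moreover clause is then automatic: for $\I=\Z_{\geq\intval}$ with $\intval\geq 2$ only the forward implication of Theorem \ref{suff_condn} is available, so only $(1)\Rightarrow(2)$ survives.

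The main obstacle, and essentially the only delicate bookkeeping, is justifying that the substitution $\pi_{\Ga_i}(x)\mapsto h_x(\k)$ is harmless. This is where the homogeneity check does the work: the positive normalising constants relating $\pi_{\Ga_i}$ to $h_{\Ga_i}(a)$ cancel identically on both sides of (a) and (b), and the positivity furnished by Corollary \ref{clm2} is exactly what lets one pass back from the purely algebraic condition $a\in\mathbb{V}(I)$ to an honest probability distribution on each $\Ga_i$. Once these two points are in place, everything else is a direct transcription of Theorem \ref{suff_condn} through the ring homomorphism $h_{\Ga}^*$.
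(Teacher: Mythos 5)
Your proposal is correct and takes essentially the same route as the paper: both directions are obtained by pushing the relations of Theorem \ref{suff_condn} (i.e.\ Lemma \ref{Lemma_nec} and Proposition \ref{suff_cond}) through the polynomial parametrisation $h_\Gamma$ of Corollary \ref{clm2}, using positivity of the $h_x(a)$ to recover genuine stationary distributions in the converse direction. Your explicit scale-invariance check, showing that the normalising constants $Z_{\Gamma_i}$ cancel identically in relations (a) and (b), is a careful elaboration of a step the paper's proof compresses into ``by definition,'' but it is the same argument.
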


\begin{example} We continue Examples \ref{ex1} and \ref{ex1_}. To check whether $\mathbb{V}(I)_{>0}$ is non-empty amounts by construction to check whether for all $j\geq 1$, $\mathbb{V}(I_j)_{>0}$ is nonempty (by Remark \ref{suff_rem_}).  However, Remark \ref{suff_rem_} shows that there is a finite set of polynomial conditions that one can check, even thought it might not be easy to find. Let us point out first that there are no relations coming from Theorem \ref{suff_condn} (a), so we only have to check the relations from (b). Moreover note that even if all relations in the ideal must be checked to guarantee product-form, finding an element with no positive solutions in this ideal already proves that there is no product-form independently of the rates.

 Consider the following element of $J_4\subseteq J$
 $$
 \pi_{\Ga_3}(0,3)\pi_{\Ga_3}(1,2)\pi_{\Ga_2}(1,1)\pi_{\Ga_4}(2,2)-\pi_{\Ga_3}(2,1)\pi_{\Ga_3}(1,2)\pi_{\Ga_2}(0,2)\pi_{\Ga_4}(1,3)
 $$
  from Definition \ref{product_ideal} (where we are using the polynomials with variables in coordinates of irreducible components, see Example \ref{ex1_}). Then the image in $\R[\k_i|\nu_i\to \nu_i\p\in\cR ]$ is given by

  $$h_{(0,3)}^{}(\k)h_{(1,2)}^{}(\k)h_{(1,1)}^{}(\k)h_{(2,2)}^{}(\k)-h_{(2,1)}^{}(\k)h_{(1,2)}^{}(\k)h_{(0,2)}^{}(\k)h_{(1,3)}^{}(\k)\mbox{,}$$  
were we write $h_{x}^{}(\k)$ for the polynomial 
from Section \ref{h} that we constructed in Corollary \ref{clm2}. Then, inserting the homogeneous polynomials $h_\Ga(\k)$ from Example \ref{ex1_} and setting it to zero gives the following condition in the reaction rates $$6\a^3\be^4[(6\a+\be)-3(2\a+\be)]=6\a^3\be^4(-2\be)=0$$ It is easy to see that there are no positive solutions to this equation. We conclude that $\mathbb{V}(I)_{>0}$ is empty and this CRN has non-product-form stationary distribution independently of the rate. 
\end{example}

 \begin{remark}
For more general conservative or subconservative CRNs, corresponding algebraic relations as in Theorem \ref{suff_condn} usually still exist. However, they are only necessary for product-form stationary distribution, but not sufficient.
 \end{remark}

\section{Examples}\label{exa}
We consider here several classes of examples, most of which have a particular state space decomposition into irreducible components. We start with CRNs, where we first study weakly reversible CRNs, and then move beyond the case of weak reversibility. Finally we illustrate some other Markov processes from applied probability where the algebraic relations from Theorem \ref{suff_condn} apply as well.

\subsection{Reaction networks}\label{sect_ex}\hfill\break
We consider CRNs with stochastic Mass-action kinetics that have a state space decomposition along the same irreducible components as in Theorem \ref{suff_condn}. Hence on each such irreducible component its CTMC dynamics are positive recurrent.
We mainly aim to distinguish the three possibilities for the rate space concerning product-form stationary distributions (see Definition \ref{three_cas}), which we abbreviate by 
\begin{align*}
&N = \text{nonexistence of product-form stationary distribution},\\
 &I = \text{existence of product-form stationary distribution independently of the rate},\\
 &E = \text{existence of both product and non-product-form stationary distribution}.
 \end{align*}
 \begin{remark}
 As a consequence of Theorem \ref{suff_condn}, for a CRN $\cG$ with $\I=\Z_{\geq 1}$ we have the following equivalences in terms of the sets in Definition \ref{product_ideal}:
 $$N\iff V_{\cG,>0}=\emptyset,\quad I\iff V_{\cG,>0}=\R^\cR_{>0},\quad E\iff \emptyset\neq V_{\cG,>0}\subsetneq\R^\cR_{>0}$$
 \end{remark}
Consider the following CRN with reactions of molecularity up to two for future reference.
\begin{equation}\begin{tikzcd}\label{eq:big_RN}
S_1\arrow[r, shift left=1ex,"\alpha"]  &S_2 \arrow[l,"\beta"]\\ 
2S_1 \arrow[rr, shift left=1ex,"\la_1"] 
\arrow[dr,  shift left=1ex,"\la_3"] 
&& 2S_2 \arrow[dl, shift left=1ex,"\la_4"] \arrow[ll,"\la_2"]\\
& S_1+S_2\arrow[ul,"\la_5"]\arrow[ur,"\la_6"]
\end{tikzcd}
\end{equation}

\subsubsection{Examples of weakly reversible CRNs}\hfill\break
Let $\cG$ be a weakly reversible CRN. Then we know that there exist complex balanced reaction rates for which the system has product-form stationary distribution of Poisson form \cite{anderson2}. In particular, we recall that the reaction rates where the CRN is complex balanced is given by the toric variety from the moduli ideal, which we denote here by $M_\cG$ as in \cite{sturmfels}. Hence we have the following:
\begin{lemma}\label{comp_toric}
Let $\cG$ be weakly reversible with state space decomposition as in Theorem \ref{suff_condn} and with $\mathcal{I}=\Z_{\geq m}$.
Then the following relation holds between the positive part of the toric variety $V_{>0}(M_\cG)$ from \cite{sturmfels} and $V_{\cG,>0}$ from Section \ref{semi}:
$$\emptyset\neq V_{>0}(M_\cG)\subseteq V_{\cG,>0}$$
Furthermore if the deficiency of $\cG$ is zero, we have $V_{>0}(M_\cG)= V_{>0}(I_\cG)=\R^\cR_{>0}$.
\end{lemma}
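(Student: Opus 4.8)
The plan is to prove the two displayed relations separately, assembling cited structural facts about complex-balanced (toric) dynamical systems with the equivalence of Theorem \ref{semi_alg}. The nonemptiness $\emptyset\neq V_{>0}(M_\cG)$ and the toric description of the complex-balanced locus are exactly the content I would import from \cite{sturmfels}: for a weakly reversible network the set of rate vectors $\k\in\R^\cR_{>0}$ at which $(\cG,\k)$ is complex balanced is precisely the positive part $V_{>0}(M_\cG)$ of the variety of the moduli ideal, and this set is always nonempty (weakly reversible networks are toric dynamical systems). I would treat this as the given input rather than reprove it.

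For the inclusion $V_{>0}(M_\cG)\subseteq V_{\cG,>0}$, I would take an arbitrary $\k\in V_{>0}(M_\cG)$, so that $(\cG,\k)$ is complex balanced at some point of complex balance $c\in\R^n_{>0}$. By the Anderson--Craciun--Kurtz theorem \cite{anderson2}, recalled in Section \ref{res_stat}, on every positive recurrent irreducible component $\Ga$ the stationary distribution equals $\pi_\Ga(x)=Z_\Ga^{-1}c^x/x!$. The key point is that the same $c$, and hence the same single-species functions $f_i(x_i)=c_i^{x_i}/x_i!$, serves all components simultaneously, so this is product-form in the sense of Definition \ref{prod_f}. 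Consequently $(\cG,\k)$ has product-form stationary distribution, and the implication $(1)\Rightarrow(2)$ of Theorem \ref{semi_alg} (which holds for any index set $\mathcal{I}=\Z_{\geq m}$, $m\geq1$) gives $\k\in\mathbb{V}(I)$; together with $\k\in\R^\cR_{>0}$ this yields $\k\in V_{\cG,>0}$, proving the inclusion.

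For the deficiency zero case I would invoke the Deficiency Zero Theorem (Feinberg): a weakly reversible network of deficiency zero is complex balanced at \emph{every} positive rate vector, so $V_{>0}(M_\cG)=\R^\cR_{>0}$. Combining this with the already established chain
$$V_{>0}(M_\cG)\ \subseteq\ V_{\cG,>0}=V_{>0}(I_\cG)\ \subseteq\ \R^\cR_{>0},$$
where the last inclusion is immediate from the definition $V_{\cG,>0}=\mathbb{V}(I)\cap\R^\cR_{>0}$, forces all three sets to coincide with $\R^\cR_{>0}$.

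The only point genuinely requiring care, and hence the main obstacle, is the uniformity in the Anderson--Craciun--Kurtz step: I must confirm that the point of complex balance $c$ depends on $\k$ alone and not on the chosen irreducible component, so that a single family $(f_i)_{S_i\in\cS}$ realizes the product-form across \emph{all} $\Ga$ as Definition \ref{prod_f} demands (rather than a mere factorization on one component). Once that is settled, the remainder is a bookkeeping composition of the three cited results, and nothing in the argument depends on the precise value of $m$ beyond needing only the direction $(1)\Rightarrow(2)$ of Theorem \ref{semi_alg}.
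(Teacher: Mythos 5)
Your proposal is correct and follows essentially the same route as the paper, which states the lemma as an immediate assembly of the cited facts: the nonempty positive toric variety of complex-balanced rates from \cite{sturmfels}, the Anderson--Craciun--Kurtz Poisson product-form (with a single point of complex balance $c$ serving all irreducible components, exactly the uniformity you flag) from \cite{anderson2}, the implication $(1)\Rightarrow(2)$ of Theorem \ref{semi_alg}, and the Deficiency Zero Theorem for the final equality. Your write-up merely makes explicit the bookkeeping the paper leaves implicit, so there is nothing to add.
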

As there are always reaction rate values where weakly reversible CRNs are complex balanced, they are at least of type $E$ or even of type $I$. Indeed, it is well-known that many weakly reversible CRNs are complex balanced independently of the rate (type I), for example if they have zero deficiency. Furthermore, it was shown in \cite{hoesslyuni} that many weakly reversible CRNs of higher deficiencies 
 are also of type $I$, i.e. they satisfy $V_{>0}(M_\cG)\subsetneq V_{\cG,>0}=\R^\cR_{>0}$. 

We next collect some examples and refer to Section \ref{subs_examples} for the computations.
\begin{center}

\begin{tabular}{ |c|c|c|c| }
 \hline
Index  & CRN & deficiency &classification \\

 \hline
 W1  & $S_1\ce{<=>} S_2$ &$0$& I  \\
            \hline 
W2  & $S_1\ce{<=>} S_2$ &$1$& I  \\
&$S_1+S_2\ce{<=>} 2S_2$&&\\
            \hline
 W3  & $S_1\ce{<=>} S_2$ &$1$& E  \\
 &$2S_1\ce{<=>} 2S_2$&&\\
            \hline
 W4  & $2S_1\ce{<=>} S_1+S_2$ &$1$& E  \\
 &$2S_1\ce{<=>} 2S_2$&&\\
            \hline 
W5  & $
 S_1 \ce{<=>}S_2 \ce{<=>} S_3$
  &$2$& I  \\
  &$2S_1 \ce{<=>} S_1+S_2,\quad 2S_3\ce{<=>} S_2+S_3$&&\\
            \hline
W6  & $
 S_1 \ce{<=>}S_2 \ce{<=>} S_3, \quad S_2\ce{<=>}S_4$
  &$2$& I  \\
  &$2S_1 \ce{<=>} S_1+S_2,\quad 2S_3\ce{<=>} S_2+S_3$&&\\
            \hline
\end{tabular}
\end{center}

The CRNs W1,W2, W5 and W6 all have product form stationary distributions independently of the rate. While the stationary distribution of a weakly reversible CRN is projectively equivalent to a Poisson form if and only if the reaction rate values are complex balanced \cite{Cappelletti}, some weakly reversible CRNs of nonzero deficiency have product-form stationary distributions independently of the rate, i.e. beyond complex balance (like $W2,W5,W6$ above; more can be found in \cite{hoesslyuni}). 
On the other hand, from our investigation into the above examples we observe the following (proofs can be found in Section \ref{subs_examples}):
\begin{lemma}
W3 and W4 have product-form stationary distribution if and only if they are complex balanced, i.e. for W3 and W4 $V_{>0}(M_\cG)= V_{\cG,>0}$.
\end{lemma}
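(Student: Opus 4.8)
The plan is to prove the set equality $V_{>0}(M_\cG)= V_{\cG,>0}$ by two inclusions, one of which is essentially free, and then read off the probabilistic equivalence. Both W3 and W4 are weakly reversible conservative CRNs on $\cS=\{S_1,S_2\}$ whose stoichiometric subspace is the line spanned by $S_2-S_1$; hence their positive irreducible components are exactly the simplices $\Ga_l=\{x\in\Z_{\geq 0}^2 \st x_1+x_2=l\}$ of Theorem \ref{suff_condn}, with index set $\mathcal{I}=\Z_{\geq\intval}$ and $\intval=2$ (the bimolecular reactions are inactive on $\Ga_1$). Recall that $V_{\cG,>0}:=\mathbb{V}(I)_{>0}$ by Definition \ref{product_ideal}, so the stated equality is purely algebraic. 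The inclusion $V_{>0}(M_\cG)\subseteq V_{\cG,>0}$ is given directly by Lemma \ref{comp_toric}, so the entire content is the reverse inclusion $V_{\cG,>0}\subseteq V_{>0}(M_\cG)$: no positive product-form rate lies outside the complex-balanced locus.

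To obtain the reverse inclusion I would make the ideal $I$ concrete, exactly as in Example \ref{ex1_} and the worked example preceding this lemma. First I compute the one-dimensional kernels $h_{\Ga_l}(\k)$ of the master-equation matrices $A(\k)$ from Corollary \ref{clm2} for the first few $l$ (say $l=2,3,4$); this is the step I would run in Maple, cf. Section \ref{subs_examples}. I then write down the relations of Theorem \ref{suff_condn}(a),(b) for these components and substitute the $h_{\Ga_l}(\k)$ for the symbols $\pi_{\Ga_l}(x)$ through the homomorphism $h_\Ga^*$ of Section \ref{h}, producing generators of $I$. As in that worked example, I expect each such generator to factor as a positive monomial in the rates times a single binomial; dropping the positive factor (harmless on $\R^\cR_{>0}$) isolates a binomial $g$ — of the shape $\beta^2\gamma-\alpha^2\delta$ for W3 and the analogous $k_2^2k_3-k_1^2k_4$ for W4, in a labelling of the rates by the reaction graph. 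For these deficiency-one networks this binomial generates the moduli ideal $M_\cG$, so its positive zero set is precisely $V_{>0}(M_\cG)$.

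The delicate point is completeness: $I$ is the image of the infinite ideal $J$, and membership in $\mathbb{V}(I)$ encodes relations from arbitrarily large $\Ga_l$, so a priori the binomial from small $l$ is only one necessary condition among infinitely many. I avoid determining $I$ in full by a squeeze that uses only $g\in I$. Since $g\in I$ we have $\mathbb{V}(I)\subseteq\mathbb{V}(\langle g\rangle)$, and on the positive orthant $\mathbb{V}(\langle g\rangle)_{>0}=V_{>0}(M_\cG)$, so
$$V_{>0}(M_\cG)\subseteq V_{\cG,>0}=\mathbb{V}(I)_{>0}\subseteq\mathbb{V}(\langle g\rangle)_{>0}=V_{>0}(M_\cG),$$
the first inclusion being Lemma \ref{comp_toric}. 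All terms coincide, giving $V_{\cG,>0}=V_{>0}(M_\cG)$. The probabilistic equivalence then follows: complex balance implies product-form by Lemma \ref{comp_toric}, while product-form implies $a\in\mathbb{V}(I)_{>0}=V_{\cG,>0}=V_{>0}(M_\cG)$ by the implication $(1)\Rightarrow(2)$ of Theorem \ref{semi_alg}, which is valid for $\intval\geq 2$. The finiteness guaranteed by Lemma \ref{Lemma:noet} only reassures me that a decisive generator $g$ exists after finitely many $l$; the main obstacle is simply verifying by explicit kernel computation that the binomial already appears for small $l$, and I expect $l\leq 4$ to suffice for both W3 and W4.
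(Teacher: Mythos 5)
Your proposal is correct and is essentially the paper's own proof: the paper likewise computes the kernels $h_{\Gamma_l}(\k)$ and the ideals for small $l$ in Maple, finds the complex-balance binomial ($\alpha^2\lambda_2-\beta^2\lambda_1$ in $I_3$ for W3, $\lambda_1\lambda_5^2-\lambda_2\lambda_3^2$ in $I_4$ for W4), and closes exactly your squeeze --- membership of the binomial in $I$ gives $V_{\cG,>0}\subseteq V_{>0}(M_\cG)$, while Lemma \ref{comp_toric} gives the reverse inclusion. Your only slip is harmless: W3 contains $S_1\rightleftharpoons S_2$, so its irreducible components are indexed by $\mathcal{I}=\Z_{\geq 1}$ rather than $\Z_{\geq 2}$, but since your argument uses only the necessity direction of Theorem \ref{semi_alg} (valid for every $\intval$) together with the fact that complex balance implies product form, nothing in the squeeze depends on this.
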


This leads us to conjecture the following to be true.
\begin{conjecture}
Consider a conservative and weakly reversible CRN $\cG$ under\\ Mass-action kinetics. Assume that for each $r\in\cR$ there is no $\tilde{r}\in\cR,r\neq \tilde{r}$ such that $\nu _r\p-\nu _r=\nu _{\tilde{r}}\p-\nu _{\tilde{r}}$. Then $(\cG,\k)$ has product-form stationary distribution if and only if $(\cG,\k)$ is complex balanced.
\end{conjecture}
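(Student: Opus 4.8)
The \emph{if} direction is precisely the theorem of Anderson, Craciun and Kurtz \cite{anderson2}: a complex balanced $(\cG,\k)$ admits the Poisson product-form $\pi_\Ga(x)=Z_\Ga^{-1}c^x/x!$, and this is already recorded in the inclusion $V_{>0}(M_\cG)\subseteq V_{\cG,>0}$ of Lemma \ref{comp_toric}. So the entire content is the reverse implication, namely that product-form forces complex balance, i.e.\ $V_{\cG,>0}\subseteq V_{>0}(M_\cG)$; together with Lemma \ref{comp_toric} this would upgrade the inclusion to the equality $V_{\cG,>0}=V_{>0}(M_\cG)$ already verified for W3 and W4. My plan is to prove this reverse inclusion by showing that, under the distinct-reaction-vector hypothesis, every product-form stationary distribution is forced to be projectively of Poisson type $\pi_\Ga(x)=Z_\Ga^{-1}\prod_i c_i^{x_i}/x_i!$; complex balance would then follow from the characterisation of \cite{Cappelletti}, which says that for weakly reversible CRNs a stationary distribution projectively equivalent to a Poisson form occurs exactly at complex balanced rates.

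To extract the Poisson shape I would fix product-form functions $f_i$ and set $g_i(n):=f_i(n+1)/f_i(n)$, so that ``$f_i$ is Poisson'' becomes the statement that $(n+1)g_i(n)$ is constant in $n$. Substituting $\pi_\Ga(x)=Z^{-1}_\Ga\prod_i f_i(x_i)$ into the Master equation \eqref{master_eq2} and dividing by $\pi_\Ga(x)$, each term attached to a reaction $\nu\to\nu\p$ becomes a product of ratios $g_i$ evaluated along the coordinates of $\nu\p-\nu$, weighted by the mass-action falling factorial. The key structural input is the hypothesis: since distinct reactions carry distinct reaction vectors $\nu\p-\nu$, the entries of $A(\k)$ off the diagonal reduce to single reaction terms rather than sums (cf.\ \eqref{eq:eqq1}), so that in the balance identity at a state $x$ the contributions can be separated by the transition direction $\xi=\nu\p-\nu$ and matched individually. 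I would combine this separation with weak reversibility, which guarantees a directed return path for every reaction, to turn the global balance along a sliding family of states $x$ into a closed system of functional equations in the $g_i$.

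The technical heart is then to show that this system has only the Poisson solutions $g_i(n)=c_i/(n+1)$. I would first handle the generators of the stoichiometric directions: comparing the balance at $x$ and at $x+e_j$ isolates the dependence on $g_j$, and the falling factorials $x!/(x-\nu)!$ contribute exactly the $1/(n+1)$ factor, so that a non-Poisson $f_j$ would produce a polynomial-in-$x$ identity whose leading terms cannot cancel once all reaction vectors are distinct. The compatibility relations of Theorem \ref{suff_condn}(a),(b) — which product-form must satisfy and whose images under $h_\Ga^*$ generate the ideal $I$ — should then furnish the cross-component identities needed to propagate this rigidity from $\Ga_l$ to $\Ga_{l+1}$ and to exclude species-coupled corrections. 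Concretely, the target is to prove that on the positive orthant the implicitly defined ideal $I$ of Definition \ref{product_ideal} and the explicit toric moduli ideal $M_\cG$ cut out the same points, i.e.\ $V_{\cG,>0}=V_{>0}(M_\cG)$.

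I expect the main obstacle to be exactly this last comparison of ideals in full generality. The ideal $M_\cG$ is an explicit binomial (toric) ideal, whereas $I$ is defined only implicitly through the cofactor/Gaussian-elimination construction of the maps $h_\Ga$ (Corollary \ref{clm2}), whose entries are uncontrolled high-degree polynomials for large networks; matching the two reaction-by-reaction is transparent for molecularity $\le 2$ and short linkage classes (as in W3, W4) but hard to organise combinatorially for arbitrary weakly reversible topology. A second difficulty is that for a general conservative CRN the irreducible components need not be the simplices $\{x:\sum_i x_i=l\}$ required by Theorem \ref{suff_condn}: the conservation weight $w$ may fail to be $\mathbf{1}$, so one must first reduce to, or re-derive, the compatibility relations on the actual lattice polytopes $\{x:\sum_i w_ix_i=l\}$. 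Controlling the coupled functional equations uniformly across all these components, and ruling out exotic non-Poisson product forms whenever several reactions share a complex, is the crux that keeps the assertion a conjecture rather than a theorem.
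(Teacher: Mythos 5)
This statement is an open problem in the paper, not a theorem: the authors explicitly write that their examples ``lead us to conjecture the following to be true,'' and no proof appears anywhere in the text. So there is no paper argument to compare yours against, and the question is only whether your proposal closes the conjecture. It does not. The \emph{if} direction is indeed immediate from \cite{anderson2} and is recorded in Lemma \ref{comp_toric}; and your reduction of the \emph{only if} direction to ``product-form forces Poisson form,'' followed by the characterisation of \cite{Cappelletti}, is the right logical skeleton. But the step carrying all the weight --- that the functional equations in the ratios $g_i(n)=f_i(n+1)/f_i(n)$ admit only the Poisson solutions $g_i(n)=c_i/(n+1)$ under the distinct-reaction-vector hypothesis --- is asserted, not proven. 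The claim that a non-Poisson $f_j$ ``would produce a polynomial-in-$x$ identity whose leading terms cannot cancel once all reaction vectors are distinct'' is essentially the conjecture restated in different language: the hypothesis only guarantees that the off-diagonal entries of $A(\k)$ are single mass-action terms, and nothing in your sketch converts that into the required cancellation argument. Note also that the hypothesis must enter in a genuinely quantitative way: W2, W5 and W6 in the paper are conservative, weakly reversible, of positive deficiency, and have product-form stationary distributions at \emph{all} rates (hence at non-complex-balanced ones); they escape the conjecture only because some of their reactions share a reaction vector. A proof that uses the hypothesis merely to ``separate contributions by transition direction'' cannot distinguish these cases from W3 and W4, where the conclusion holds.

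Your second listed obstacle is also a real gap in scope, not just a technicality. Theorem \ref{suff_condn} gives an equivalence only for components $\Ga_l=\{x:\sum_i x_i=l\}$ with $\mathcal{I}=\Z_{\geq 1}$ (and only necessity for $\intval\geq 2$); for a general conservative CRN the components are lattice points of $\{x:\sum_i w_ix_i=l\}$, where the paper states the analogous relations are necessary but not sufficient. Hence the identification $V_{\cG,>0}=\mathbb{V}(I)_{>0}$ that your ideal-comparison strategy relies on is not available in the generality of the conjecture, and the comparison $V_{\cG,>0}=V_{>0}(M_\cG)$ is verified in the paper only for the two examples W3 and W4 by explicit computation of low-degree generators of $I$. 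In short: the trivial direction is correct, the proposed strategy is sensible and consistent with the paper's evidence, but the converse direction --- the entire content of the statement --- remains unproven, as you yourself concede in your final paragraph.
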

In particular, assuming the positive recurrence conjecture holds (see, e.g., \cite{andersonpos}) we believe the following might be true.
\begin{conjecture}
Consider a weakly reversible CRN $\cG$ under Mass-action kinetics such that for each $r\in\cR$ there is no $\tilde{r}\in\cR,r\neq \tilde{r}$ with $\nu _r\p-\nu _r=\nu _{\tilde{r}}\p-\nu _{\tilde{r}}$. Then $(\cG,\k)$ has product-form stationary distribution if and only if $(\cG,\k)$ is complex balanced, i.e. it only exhibits Poissonian product-form stationary distributions.
\end{conjecture}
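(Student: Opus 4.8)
The plan is to separate the two implications, treating ``complex balanced $\Rightarrow$ product-form'' as essentially known and concentrating the effort on the converse. For the forward direction, if $(\cG,\k)$ is complex balanced then by \cite{anderson2} every positive recurrent irreducible component carries the Poisson product-form $\pi_\Ga(x)=Z_\Ga^{-1}c^x/x!$ for a point of complex balance $c\in\R^n_{>0}$, which is in particular of product-form; under the positive recurrence conjecture the relevant components are positive recurrent, so the statement is not vacuous (in the conservative setting this is automatic by Remark \ref{rmk_pos_cons}). It thus remains to prove that product-form forces complex balance.

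By the characterisation of \cite{Cappelletti}, a weakly reversible mass-action system is complex balanced if and only if its stationary distribution is a product of Poissons, i.e. $f_i(n)\propto c_i^n/n!$. Hence, writing a putative product-form stationary distribution as $\pi_\Ga(x)=Z_\Ga^{-1}\prod_{S_i\in\cS}f_i(x_i)$ with the \emph{same} functions $f_i$ across all positive recurrent components (Definition \ref{prod_f}), the whole task reduces to showing that these $f_i$ must be of Poisson type. I would encode this through the ratios $\rho_i(n):=f_i(n)/f_i(n-1)$, the Poisson case being exactly $\rho_i(n)=c_i/n$ for a constant $c_i>0$.

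The main step is to exploit the distinctness hypothesis. Writing each reaction as $\nu\to\nu\p$ with net vector $\xi=\nu\p-\nu$, distinctness says that $(\nu\to\nu\p)\mapsto\xi$ is injective, so every jump direction in the state space is realised by a \emph{unique} reaction. Putting the Master equation \eqref{master_eq1} into flux form, stationarity reads $\sum_{\nu\to\nu\p\in\cR}J_{\nu\to\nu\p}(x+\nu-\nu\p)=\sum_{\nu\to\nu\p\in\cR}J_{\nu\to\nu\p}(x)$ with $J_{\nu\to\nu\p}(y)=\k_{\nu\to\nu\p}\tfrac{y!}{(y-\nu)!}\1_{y\geq\nu}\prod_i f_i(y_i)$. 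Dividing by $\prod_i f_i(x_i)$ rewrites every summand as a product of falling factorials and ratio factors $\prod_i f_i(x_i-\xi_i)/f_i(x_i)$, i.e. explicitly in terms of the $\rho_i$. The aim is then to use the injectivity of $\xi$, together with variation of $x$ across an irreducible component, to disentangle the per-reaction contributions and extract functional constraints on the $\rho_i$; weak reversibility supplies, within each linkage class, the return paths among complexes needed to close these constraints into a system whose only positive solution is $\rho_i(n)=c_i/n$. Once all $f_i$ are of Poisson type, \cite{Cappelletti} yields complex balance. In the language of Section \ref{semi} this is the reverse inclusion $V_{\cG,>0}\subseteq V_{>0}(M_\cG)$ complementing Lemma \ref{comp_toric}, upgrading it to the equality $V_{\cG,>0}=V_{>0}(M_\cG)$.

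I expect the crux, and the reason this remains a conjecture, to be exactly this last step of ruling out every non-Poisson product-form. Both hypotheses are essential here: dropping weak reversibility admits the non-Poisson autocatalytic families of Section \ref{res_stat}, while dropping distinctness lets two reactions share a net vector, so that only the sum of their rates is constrained and room is left for corrections to $\rho_i(n)=c_i/n$. The difficulty is that reactions of molecularity $\geq 2$ produce falling-factorial weights $\tfrac{y!}{(y-\nu)!}$ that couple several coordinates at once, so the functional equations for the $\rho_i$ are genuinely multivariate; I would attempt an induction on molecularity (or on $\sum_i x_i$), using the complex-graph connectivity from weak reversibility to peel off these couplings one at a time. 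Finally, for the non-conservative statement the positive recurrence conjecture provides a positive recurrent, necessarily infinite, irreducible component on which, by non-explosivity and uniqueness of the stationary distribution, the same flux-balance analysis applies, with the necessary relations of Theorem \ref{suff_condn} furnishing the algebraic backbone.
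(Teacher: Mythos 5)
This statement is posed in the paper as an open problem, not a theorem: the paper contains no proof of it, only computational evidence for two specific networks (W3 and W4, where the ideals $I_3$, resp.\ $I_4$, computed in Section \ref{subs_examples} turn out to contain exactly the complex-balance binomials, giving $V_{\cG,>0}=V_{>0}(M_\cG)$ in those cases). Your proposal does not close it either, and you say so yourself. The forward implication via \cite{anderson2} and the reduction of the converse via \cite{Cappelletti} (product-form implies complex balance \emph{once} the functions $f_i$ are shown to be Poisson) are correct but only rephrase the conjecture as the inclusion $V_{\cG,>0}\subseteq V_{>0}(M_\cG)$ complementing Lemma \ref{comp_toric}. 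The decisive step --- proving that the stationarity relations force $\rho_i(n)=f_i(n)/f_i(n-1)=c_i/n$ --- is precisely where your text stops being a proof and becomes a plan: ``the aim is then to use the injectivity\ldots'', ``I would attempt an induction on molecularity''. No functional equation for the $\rho_i$ is actually derived, and no mechanism is exhibited by which weak reversibility together with the distinct-net-vector hypothesis closes those constraints so that every non-Poisson product form is excluded. Identifying why the hypotheses should matter (autocatalytic counterexamples without weak reversibility; summed rates when net vectors coincide) is good motivation, but motivation for a strategy is not a proof of its success; the gap you leave open is the entire content of the conjecture.

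A second, more technical problem concerns the setting. This conjecture (unlike the preceding conservative one) allows non-conservative $\cG$, so the positive recurrent irreducible components are in general infinite. There the paper's machinery is simply unavailable: the matrices $A(\k)$ and polynomial parametrisations $h_\Ga$ of Section \ref{h} require finite components, and the equivalence in Theorem \ref{suff_condn} requires the simplex decomposition $\Ga_l=\{x\in\Z_{\geq 0}^\cS \mid \sum_i x_i=l\}$, which fails outside the conservative case; only the necessity direction (Lemma \ref{Lemma_nec}) survives. So your closing claim that ``the same flux-balance analysis applies, with the necessary relations of Theorem \ref{suff_condn} furnishing the algebraic backbone'' overstates what the paper's results provide: in the infinite case those relations are necessary but not sufficient, and the semi-algebraic description of Section \ref{semi} does not exist at all. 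Any genuine proof would have to supply new arguments both for the Poisson rigidity step and for the infinite-state-space setting.
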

Another natural question, assuming the positive recurrence conjecture holds, is \\whether the region of the reaction rate space where the CRN has product-form stationary distribution is semi-algebraic for any weakly reversible CRN under Mass-action kinetics.

\subsubsection{Examples of CRNs beyond weak reversibility}\hfill\break
Let $\cG$ be a non-weakly reversible CRN whose state space is as in Theorem \ref{suff_condn}. Then we cannot exclude any of the types I, N or E. We know that so-called autocatalytic CRNs are at least of type E or of type I \cite{hoesslysta}. Besides this, not much is currently known for non-weakly reversible CRNs. We collect some examples and refer to Section \ref{subs_examples} for the computations.

\begin{center}
\begin{tabular}{ |c|c|c|c| }
 \hline
Index  & CRN & deficiency &classification \\

 \hline
 NW1  & $S_1\ce{->} S_2,\quad 2S_2\ce{->} 2S_1 $ &$0$& N  \\
            \hline 
NW2  & $S_1\ce{<=>} S_2$ &$1$& N  \\
&$2S_1\ce{->} 2S_2$&&\\
            \hline
 NW3  & $S_1\ce{->} S_2$ &$1$& N  \\
 &$2S_1\ce{<=>} 2S_2$&&\\
            \hline
 NW4  & $2S_1\ce{->} S_1+S_2$ &$1$& N  \\
 &$2S_1\ce{<=>} 2S_2$&&\\
            \hline 
             NW5  & $2S_1\ce{<-} S_1+S_2$ &$1$& N  \\
 &$2S_1\ce{<=>} 2S_2$&&\\
            \hline 
NW6  & $S_1\ce{<=>} S_2$ &$1$& I  \\
&$2S_1\ce{->} S_1+S_2$&&\\
            \hline

NW7  & $S_1\ce{<=>} S_2$ &$1$& I  \\
&$2S_1\ce{<-} S_1+S_2$&&\\
            \hline
  NW8  & $S_1\ce{<=>} S_2$ &$2$& I  \\
&$2S_1\ce{<-} S_1+S_2\ce{->}2S_2$&&\\
  \hline

NW9  & $S_1\ce{<=>} S_2$ &$2$& I  \\
&$2S_1\ce{<-} S_1+S_2\ce{<-}2S_2$&&\\
            \hline
NW10  & $S_1\ce{<=>} S_2$ &$2$& I  \\
&$2S_1\ce{->} S_1+S_2\ce{<-}2S_2$&&\\
            \hline
NW11  & $
 S_1 \ce{<=>}S_2 \ce{<=>} S_3$
  &$4$& E  \\
  &$2S_1 \ce{<-} S_1+S_2\ce{->}2S_2$&&\\
   &$2S_2 \ce{<-} S_2+S_3\ce{->}2S_3$&&\\
            \hline
   
NW12  & $
 S_1 \ce{->}S_2 \ce{->} S_3 \ce{->}S_1$
  &2& E  \\
  &$S_1+S_2 \ce{->} 2S_2,S_2+S_3\ce{->}2S_3$&&\\
   &$S_1+S_3 \ce{->} 2S_1$&&\\
            \hline
\end{tabular}
\end{center}
 Note that NW4, NW5 have slightly different state space decomposition, i.e. the conditions from Theorem \ref{suff_condn} are only necessary for product form.
 
The so-called small number effect appears in NW7 and NW6, which correspond to \cite[Motif F]{Saito2} and \cite[Motif G]{Saito2}. NW1 was already briefly studied concerning its stationary distribution on some irreducible components in \cite[Example 2]{Cappelletti}, and it follows from our study that this is the smallest CRN with non-product-form stationary distribution independently of the rate.
NW9 is an example of a simple autocatalytic CRN on 2 species \cite{hoesslysta}, while NW11 is an instance of an autocatalytic CRN on 3 species, and an inclusion process. NW12 was studied via simulations concerning stochastic oscillations in \cite{Spieler_stoch_osc}, besides being a particular instance of the inclusion process where dynamics can only move in one direction (see Section \ref{icl} or Section \ref{more_gen_PS}).

The non-existence proofs of product-form stationary distribution for NW1-NW5 can be found in Section \ref{subs_examples}, where also derivations for E of NW11, NW12 are given.
Furthermore, based on the pattern of the reactions we conjecture the following to hold.
\begin{conjecture}
Consider a CRN $\cG$ that is conservative, almost essential and non-weakly reversible under Mass-action kinetics. Then there is a connected component $\cR\p\subseteq \cR$ that is not weakly reversible. Assume there is $r\in\cR\p$ such that there is no $\tilde{r}\in\cR\setminus \cR\p, \tilde{r}\neq r$ in a weakly reversible component such that $\nu _r\p-\nu _r=\nu _{\tilde{r}}\p-\nu _{\tilde{r}}$. Then $\cG$ has non-product-form stationary distribution independently of the rate, i.e. it is in N.
\end{conjecture}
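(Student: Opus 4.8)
The first assertion is immediate from the definitions: a CRN is weakly reversible precisely when every linkage class (connected component of the reaction graph) is strongly connected, so a non-weakly-reversible $\cG$ must contain some connected component $\cR\p\subseteq\cR$ that fails to be strongly connected, hence is itself not weakly reversible. The substance of the statement is the non-existence claim, which I would approach through the semi-algebraic description of Section \ref{semi}. Even when $\cG$ does not have the exact simplex decomposition required for the equivalence in Theorem \ref{suff_condn}, the relations in Theorem \ref{suff_condn}(a),(b) remain \emph{necessary} for product-form (as noted in the remark following Theorem \ref{semi_alg}); hence it suffices to exhibit a single generator of the ideal $I$ whose image under $h_\Ga^*$ is a polynomial that is sign-definite, i.e. nowhere zero, on $\R^\cR_{>0}$. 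Producing such an element forces $V_{\cG,>0}=\emptyset$, which is exactly type $N$.

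The plan is to build this obstruction from the distinguished reaction $r$ with reaction vector $\xi:=\nu_r\p-\nu_r$. The hypothesis that $\cR\p$ is not weakly reversible makes the transition realized by $r$ effectively one-directional at the complex level, while the uniqueness hypothesis — no $\tilde r$ in a weakly reversible component with $\nu_{\tilde r}\p-\nu_{\tilde r}=\xi$ — guarantees that the mass-action flux in the direction $\xi$ is carried by $r$ alone among the reactions that could participate in a reversible, product-form-compatible balance. Concretely, I would select a short cycle of states of the form appearing in Theorem \ref{suff_condn}(b) that traverses $\xi$ using $r$, and write down the corresponding four-term relation. Substituting the parametrised values $h_x(\k)$ from Corollary \ref{clm2} turns this into a polynomial identity in the rates, mirroring the computation that reduces NW1 (Example \ref{ex1}) to the sign-definite expression $6\a^3\be^4(-2\be)$.

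To control the image polynomial I would use the Markov-chain tree theorem (the Matrix--Tree theorem) applied to the kernel of $A(\k)$: each $h_x(\k)$ is the sum, over spanning arborescences rooted at $x$ in the transition graph on $\Ga$, of the product of the mass-action rates along the tree edges, a sum with positive coefficients consistent with Corollary \ref{clm2}. In this expansion the edges carrying the variable $\k_r$ are the transitions in the direction $\xi$ produced by $r$, and the uniqueness hypothesis guarantees that no reaction belonging to a weakly reversible linkage class produces an edge in this same direction. The strategy is then to isolate the term of lowest order in $\k_r$ on each side of the four-term relation and to show that the two leading coefficients differ by a nonzero monomial of definite sign. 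Because $\cR\p$ is not strongly connected, the ``return'' arborescences that would be needed to cancel this term do not exist, which is the structural reason the difference cannot vanish identically.

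The main obstacle I anticipate is precisely this sign-definiteness step in full generality: for the worked examples it is a finite elimination (Gr\"obner-basis) computation, but for an arbitrary conservative, almost-essential $\cG$ one must argue combinatorially that the arborescence sums entering the chosen relation cannot conspire to produce a positive zero. Making the argument uniform requires (i) a canonical choice of the cycle in Theorem \ref{suff_condn}(b) adapted to $r$ and $\xi$, (ii) a proof that the selected leading coefficient is governed solely by the local structure at $r$ — where the uniqueness and non-weak-reversibility hypotheses bite — and (iii) control of the remaining higher-order terms so they cannot restore a zero on $\R^\cR_{>0}$. I expect step (ii), linking the absence of a vector-matching reaction in a weakly reversible component to a nonzero, sign-definite leading coefficient, to be the crux, since this is exactly the place where the contrasting type-$I$ examples NW6 and NW7 — where $\xi$ is duplicated by a reaction of a weakly reversible linkage class — acquire the extra freedom (encoded by the parameters $\be_i^k$ of the autocatalytic product-form functions) that restores product-form.
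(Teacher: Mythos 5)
This statement is not proved in the paper at all: it appears in the ``Statement'' (conjecture) environment, and the surrounding text says explicitly that the authors ``conjecture the following to hold'' based on the pattern observed in NW1--NW5. So there is no paper proof to compare against, and what is being asked of you is to settle an open problem, not to reproduce an argument. Your first paragraph (existence of a non-weakly-reversible linkage class) is fine and immediate, and your overall strategy is faithful to the paper's machinery: it is exactly the bottom-up scheme the authors use on the worked examples --- take a relation of type Theorem \ref{suff_condn}(b), push it through $h_\Gamma^*$, and exhibit a sign-definite polynomial, as in the reduction of Example \ref{ex1} to $6\a^3\be^4(-2\be)$. The Matrix--Tree expansion of $h_x(\k)$ as a positive sum over spanning arborescences is a sensible and correct way to organize the coefficients, consistent with Corollary \ref{clm2}.

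The genuine gap is the one you yourself flag and then leave open: steps (ii) and (iii), i.e. the claim that the lowest-order terms in $\k_r$ of the two sides of the chosen four-term relation differ by a sign-definite monomial and that the higher-order terms cannot restore a positive zero. This is the entire content of the conjecture; everything before it is bookkeeping. Nothing in your sketch shows why the absence of a reaction in a weakly reversible component with the same reaction vector $\xi$ forces non-cancellation --- the contrast with NW6/NW7 is suggestive but is evidence, not an argument, and in the paper's own examples the conclusion is only ever obtained by explicit elimination on a case-by-case basis. A second, unaddressed problem: the conjecture is stated for general conservative, almost essential CRNs, whose irreducible components need not be the dilated simplices of Theorem \ref{suff_condn}. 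The relations of Lemma \ref{Lemma_nec} are indeed still necessary, but as Remark \ref{eq_more_gen} warns, for a given state-space decomposition they ``can be empty'': your canonical cycle in step (i) must be shown to consist of states actually lying in irreducible components of $\cG$, and for CRNs like NW4/NW5 (index set $\Z_{\geq 3}$) even the first few candidate relations are vacuous, forcing one to go deeper into the ideal chain with no a priori bound. Without (a) a combinatorial non-cancellation lemma for the arborescence sums and (b) a guarantee that a non-vacuous relation adapted to $r$ exists inside the actual irreducible components, the proposal remains a research plan rather than a proof, which matches the status the paper itself assigns to the statement.
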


\subsection{Other CTMC models}\label{subs_other_m}
 We consider next some classes of examples of interest, first treating CTMCs in particle systems from statistical mechanics and then Queuing networks. Note that we do not aim to be comprehensive in scope, instead we sketch cases of interest where states $x\in\Z^n_{\geq 0}$ transition to $x-e_i+e_j$.  Recall from Section \ref{h}, Section \ref{semi}, that the requirements for our treatment are linear parametrisations in the generator and the state space decomposition.
\subsubsection{Inclusion process}\label{icl}
The inclusion process \cite{Giardina2009,hoesslysta} is a particle system where particles can move from one site of a lattice  to another.
The CTMC dynamics evolves in 
$\Z_{\geq 0}^{\cS}$, where $\cS$ is a finite number of sites (or set of species) which, as a CTMC, is defined by a generator $\mathcal{L}$ of the form
$$\mathcal{L}h(x)=\sum_{i\ne j} p_{ij}x_i (\frac{m}{2}+x_j)(h(x+e_j-e_i)-h(x)).$$

 As observed in \cite{hoesslysta} the inclusion process can be expressed as a stochastic CRN, which is represented with sets of reactions $R_{ij}$ given by
$$\begin{tikzcd} [ row sep=1em,
  column sep=1em]
S_i\arrow[r, shift left=1ex,"\a_{i,j}^1"]  &S_j \arrow[l,"\a_{j,i}^1"]\\ 
  &2S_i \\ 
S_i+S_j \arrow[ru, "\a_{j,i}^2"] \arrow[dr,"\a_{i,j}^2"] \\
& 2S_j
\end{tikzcd}
$$
The homogeneous case is a special case of the Misanthrope process on a finite lattice \cite{Cocozza-Thivent1985}, and for $p_{ij}=p_{ji}$ (where $\a_{i,j}^1 = p_{ij}\frac{m}{2}$ and $\a_{i,j}^2=p_{ij}$) it corresponds to the symmetric inclusion process, which is generalised by autocatalytic CRNs \cite{hoesslysta}.

 These systems admit product-form stationary distributions for different parts of parameter space (see, i.e., \cite{inclusion_proc} or \cite{hoesslysta}), however, sufficient and necessary conditions on the rates are not known for product-form. After reparametrising inclusion processes, by Theorem \ref{semi_alg} the set of rate space where the processes have product-form stationary distribution is semi-algebraic.

While this is nice, in practice it is nonetheless challenging to use the approach we derived, see NW8, NW11, NW12 for examples.
\subsubsection{More general particle systems}\label{more_gen_PS}
Consider CTMC dynamics on a finite digraph of sites (i.e. $G=(\Lambda,E)$ strongly connected with $|\Lambda|<\infty$), where particles can jump from one site to another if the two vertices are connected in that direction under closed but inhomogeneous dynamics. Consider the dynamics given by the generator 
$$\mathcal{L}h(x)=\sum_{i\to j} b_{ij}(x_i,x_j)(h(x+e_j-e_i)-h(x)),$$
where $b_{ij}$ are functions from $\Z_{\geq0}\times \Z_{\geq0}\to\R_{\geq 0}$ which satisfy \begin{equation}\label{eq_irr}
b_{ij}(n,m)=0\iff n=0,
\end{equation} which we consider with linear parametrisation. This allows general types of functions $b_{ij}$, allowing for inhomogeneities which can be spatial or coming from different types of transition of one site to another (see, e.g. \cite{hoesslysta} or \cite{Grossk_overv}). 
The setting above incorporates different well-known examples and generalisations, among which are the following:
\begin{itemize}
\item Zero-range process: $b_{ij}(n,m)=r_{ij}f(n)$ where $r_{ij}$ is a parameter.
\item Target process: $b_{ij}(n,m)=\1_{n>0}r_{ij}g(n)$ where $r_{ij}$ is a parameter.
\item Inclusion process: $b_{ij}(n,m)=n(d+m)$, where $d$ is a parameter. Note that e.g. symmetric autocatalytic CRNs \cite{hoesslysta} with molecularity two representing the inclusion process
have functions $b_{ij}(n,m)=\a_{ij}^1n+\a_{ij}^2nm$.
\end{itemize}
Corresponding CTMCs are popular models for particle systems, and often admit product-form stationary distributions. We refer to \cite{Grosskinsk_inhom,inclusion_proc,Grossk_overv} for more on the setting we consider and to \cite{kipnis1999sli,Evans1,Schadschn_cx,liggett} for more on context. 
In examples where the $b_{ij}$ satisfy equation \eqref{eq_irr}, after choosing the graph structure and the constants (i.e. the functions $f,g,v$) with linear parametrisation, the subset of parameter space where the CTMC has product-form stationary distribution is semi-algebraic (by Theorem \ref{semi_alg}).

\subsubsection{Whittle networks}
In queuing networks, the particles represent customers which are in a location (or other entities). A fairly broad class of such CTMCs are Whittle networks \cite{serfozo}, where we again focus on the case of closed dynamics on a finite digraph of nodes (i.e. $G=(M,E)$ a strongly connected digraph with $|M|<\infty$). The generator is given by
$$\mathcal{L}h(x)=\sum_{i\to j} \la_{ij}\phi_{i}(x)(h(x+e_j-e_i)-h(x)),$$
where $ \la_{ij}$ are linear coefficients, and again reasonable conditions are usually imposed on the functions $\phi_{i}$ (i.e. $\phi_{i}(x)=0\iff x_i=0$). Again in such a setting, it follows that, after choosing the graph and the constants (i.e. the functions $\phi_{i}$) with linear parametrisation, the subset of rate space with product-form stationary distributions is semi-algebraic (by Theorem \ref{semi_alg}). Furthermore, more general dynamics could be considered as well, i.e., e.g., the functions could be $\phi_{ij}$ instead of $\phi_{i}$. We refer to \cite[Theorem 1.15]{serfozo} or \cite[IV Section 2, Section 5]{asmussen2003applied} for different known sufficient conditions for product-form stationary distributions.

\section{Discussion}\label{disc}
We developed, analysed and applied algebraic approaches for product-form stationary distributions to CRNs and other models from applied probability. While the algebraic characterisation is only valid for CTMC dynamics with special state space decompositions and linear parameters, it applies to many examples and gives a procedure to find the region of the reaction rate (parameter) space where there is product-form. The approach can potentially be extended to more complicated state space decompositions, as long as all irreducible components are finite, or to more general $\I$ product-form stationary distributions. Besides the applicability in CRN theory, we outlined other models of applied probability where the set of rates with product form stationary distribution is semi-algebraic.

We analysed some examples of CRNs under stochastic Mass-action kinetics, and found the smallest CRN with no product form stationary distribution. Weakly reversible CRNs always have some reaction rates that are complex balance, leading to product-form stationary distribution. Therefore the set of rates giving product form is always non-empty for them. Among the examples considered here, this set is either equal to the toric variety described in \cite{sturmfels}, or equals the whole  positive orthant (i.e. of type I and with  $\delta\geq 1$). On the other hand, our investigation into non-weakly reversible CRNs showed that corresponding examples can be of type I, N or E, where further correspondence to particle systems was observed for different examples.

However, finding the equations defining the semi-algebraic set with the proposed method appears to be cumbersome for big examples, and in practice we do not know at which point the ideal chains in Definition \ref{product_ideal} stabilise in general. In terms of expansions of the approach it would be interesting to be able to extend statements from small CRNs to bigger CRNs. In the case of product-form and some specific non-product-form stationary distributions sufficient conditions for inheritance were given in \cite{hoesslyuni}. It is natural to wonder whether sufficient conditions can be given for inheritance of non-product-form stationary distribution without using the specific form of the stationary distribution. In addition to the conjectured statements it could be of interest to extend the approach developed here to more general CRNs (i.e. more general state space decompositions), or to systematically analyse examples for CRNs (for different types of kinetics), models of statistical mechanics or stochastic networks of interest (i.e. for different configurations). In terms of CRNs it would be interesting to know which weakly reversible CRNs are complex balanced if and only if they have product form stationary distribution besides the ones with deficiency zero. More generally, it would be interesting to know structural conditions which imply either non-product-form stationary distributions independently of the rate, or product-form stationary distribution independently of the rate.

\section*{Acknowledgement}
The authors thank Jan Draisma, Christian Mazza, Manoj Gopalkrishnan and Elisenda Feliu for helpful discussions, as well as the reviewers for useful suggestions. LH is supported by the Swiss National Science Foundations Early Postdoc.Mobility grant (P2FRP2\_188023). BPE acknowledges funding from the European Union's Horizon 2020 research and innovation programme under the Marie Sklodowska-Curie IF grant agreement No 794627.

\section{Appendix}\label{app}
\subsection{Some properties of product-form distribution}\label{app_prod_f}
\hfill\break
In this part we take a slightly more abstract view. Consider an indexed set of pairwise disjoint subsets $\Ga_l\subseteq \Z_{\geq 0}^n$ with probability distributions on these sets $\pi_l\in \Delta_{\Ga_l}$, and $l\in\I$
which overall we denote by $(\Ga_l,\pi_l)_{l\in \mathcal{I}}$. We say this set has product-form distribution if there are product-form functions $f_i: \N\to \R_{>0}; S_i\in \cS$ such that for any subset $\Ga_l$ the distribution has the form
\begin{equation}\label{eq:prod_form_app}\pi_{\Ga_l}(x)=Z_{\Ga_l}^{-1}\prod_{S_i\in\cS}^{ } f_i(x_i)\end{equation}
where $$Z_{\Ga_l}=\sum_{x\in \Ga_l}\prod_{S_i\in\cS}^{ } f_i(x_i)$$ is the finite normalising constant.
The derivations to come are consequences of the  following observation.
\begin{remark}\label{basic_rem}
Assume $(\Ga_l,\pi_l)_{l\in \mathcal{I}}$ has product form distribution. Then, if both $x\in \Ga_l$ and $x+e_k\in \Ga_{l\p}$, it follows from \eqref{eq:prod_form_app} that
\begin{equation}\label{basic_prod} \frac{\pi_{\Ga_l}(x)}{\pi_{\Ga_{l\p}}(x+e_k)}=\frac{Z_{\Ga_{l\p}}}{Z_{\Ga_l}}\frac{f_k(x_k)}{f_k(x_k+1)}\end{equation}
\end{remark}
Our next result is the key to Proposition \ref{suff_cond}, and follows from the definition of product form. 
\begin{lemma} \label{Lemma_nec}
Assume $(\Ga_l,\pi_l)_{l\in \mathcal{I}}$ with $\pi_l\in \mathring\Delta_{\Ga_l}$ has product form distribution. Then the following two conditions are satisfied:
\begin{enumerate}
\item[(a)]\label{a} Assume that $x,x+e_j-e_k\in \Ga_i$ and $y,y+e_j-e_k\in \Ga_{i+1}$ are such that $x_j=y_j,x_k=y_k$. Then
$$\frac{\pi_{i}(x+e_j-e_k)}{\pi_i(x)}=\frac{\pi_{i+1}(y+e_j-e_k)}{\pi_{i+1}(y)}$$
\item[(b)] Assume that $x\in \Ga_i;\quad x+e_j,y\in \Ga_{i+1}$ and $y+e_j\in \Ga_{i+2}$ are such that $x_j=y_j$. Then
$$\frac{\pi_{i+1}(x+e_j)}{\pi_i(x)}=a_i\frac{\pi_{i+2}(y+e_j)}{\pi_{i+1}(y)}$$
where $a_i$  is a constant that only depends on the index $i\in \mathcal{I}$.
\end{enumerate}
In particular, these conditions are necessary for product form distribution.
\end{lemma}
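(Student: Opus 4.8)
The plan is to derive both identities directly from the product-form representation \eqref{eq:prod_form_app}, exploiting the fact that such a distribution factorises over species, so that incrementing a single coordinate alters exactly one factor $f_i$. The positivity hypothesis $\pi_l\in\mathring\Delta_{\Ga_l}$ together with $f_i>0$ guarantees that every ratio written below is well defined, so no case distinctions on vanishing denominators are needed.

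For (a) I would expand $\pi_i(x+e_j-e_k)$ and $\pi_i(x)$ using \eqref{eq:prod_form_app}. Since $x+e_j-e_k$ agrees with $x$ in every coordinate except $j$ (where it equals $x_j+1$) and $k$ (where it equals $x_k-1$), the normalising constant $Z_{\Ga_i}$ and all factors $f_l(x_l)$ with $l\neq j,k$ cancel in the quotient, leaving
$$\frac{\pi_i(x+e_j-e_k)}{\pi_i(x)}=\frac{f_j(x_j+1)\,f_k(x_k-1)}{f_j(x_j)\,f_k(x_k)}\mbox{.}$$
The identical computation on $\Ga_{i+1}$ yields the analogous expression in terms of $y_j,y_k$. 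The hypotheses $x_j=y_j$ and $x_k=y_k$ then force the two right-hand sides to coincide, which is exactly (a). Note that the component indices $i$ and $i+1$ never survive into the final ratio, so the relation is genuinely a statement about the species functions alone.

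For (b) I would instead keep track of the normalising constants, since the two distributions now live on consecutive components. Using \eqref{basic_prod} (equivalently, expanding \eqref{eq:prod_form_app} as above) gives
$$\frac{\pi_{i+1}(x+e_j)}{\pi_i(x)}=\frac{Z_{\Ga_i}}{Z_{\Ga_{i+1}}}\frac{f_j(x_j+1)}{f_j(x_j)},\qquad \frac{\pi_{i+2}(y+e_j)}{\pi_{i+1}(y)}=\frac{Z_{\Ga_{i+1}}}{Z_{\Ga_{i+2}}}\frac{f_j(y_j+1)}{f_j(y_j)}\mbox{.}$$
Since $x_j=y_j$, the two $f_j$-ratios agree; dividing the first quantity by the second therefore isolates
$$a_i=\frac{Z_{\Ga_i}\,Z_{\Ga_{i+2}}}{Z_{\Ga_{i+1}}^{2}}\mbox{,}$$
which involves only the normalising constants of the three consecutive components indexed by $i$, and in particular does not depend on $x,y$ or on the coordinate $j$. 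This is precisely the claimed form of (b).

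The computation is elementary and I do not expect a genuine obstacle. The only point requiring care is the bookkeeping of which factors cancel, and the verification in (b) that the leftover constant $a_i$ is truly independent of the chosen states $x,y$ and of the coordinate $j$; this independence is exactly what the coincidence of the $f_j$-ratios, forced by $x_j=y_j$, delivers.
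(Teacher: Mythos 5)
Your proof is correct and follows exactly the route the paper intends: the paper gives no detailed argument, stating only that the lemma ``follows from the definition of product form'' via the ratio identity \eqref{basic_prod} of Remark \ref{basic_rem}, which is precisely the cancellation computation you carry out, including the identification $a_i=Z_{\Ga_i}Z_{\Ga_{i+2}}/Z_{\Ga_{i+1}}^2$. You have simply filled in the elementary bookkeeping that the paper leaves implicit.
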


It is possible to express the previous conditions which were defined via fractions as products. Together with the fact that the constants $a_i$ only depend on the irreducible component $\Gamma $, this gives the following observation:
\begin{remark}\label{eq_lem}
Equivalent conditions to the ones of Lemma \ref{Lemma_nec} that must hold for product form stationary distributions are the following:
\begin{enumerate}
\item[(a)] Assume that $x,x+e_j-e_k\in \Ga_i$ and $y,y+e_j-e_k\in \Ga_{i+1}$ are such that $x_j=y_j,x_k=y_k$. Then
$$\pi_{i}(x+e_j-e_k)\pi_{i+1}(y)=\pi_{i+1}(y+e_j-e_k)\pi_i(x)   $$
\item[(b)] Assume that $x,z\in \Ga_i;\quad x+e_j,y,z+e_k,w\in \Ga_{i+1}$ and $y+e_j,w+e_k\in \Ga_{i+2}$ are such that $x_j=y_j$ and $z_k=w_k$. Then
$$\pi_{i+1}(x+e_j)\pi_{i+1}(y)\pi_{i+2}(w+e_k)\pi_i(z)=\pi_{i+1}(z+e_k)\pi_{i+1}(w)\pi_{i+2}(y+e_j)\pi_i(x)$$
\end{enumerate}

\begin{remark}\label{eq_more_gen}
More general conditions can be given by replacing $i+1,i+2$ by arbitrary indices $i\p,i\pp$ in the formulation of Lemma \ref{Lemma_nec} and Remark \ref{eq_lem}, but then they become less transparent. Furthermore we will only use them here in the form of Lemma \ref{Lemma_nec}. Note that, depending on the subsets $\Ga_l$ of $\Z_{\geq 0}^n$, the conditions can be empty.
\end{remark}

\end{remark}

\begin{proposition}\label{suff_cond}  Consider $(\Ga_l,\pi_l)_{l\in \mathcal{I}}$, where $$\Ga_l=\{x\in \Z^\cS_{\geq 0}|\sum_{S_i\in \cS} x_i=l\}$$  and $\pi_l\in \mathring\Delta_{\Ga_l}$. Assume that $\mathcal{I}=\Z_{\geq 1}$. Then the two conditions of Lemma \ref{Lemma_nec} are sufficient for product-form.
\end{proposition}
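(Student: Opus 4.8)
The plan is to reconstruct the single-species functions $f_i$ directly from the one-step \emph{vertical} ratios and then verify product form by induction on the level $l$. I may assume $n=|\cS|\geq 2$, since for $n=1$ each $\Ga_l$ is a single point and product form is trivial. For $x\in\Ga_l$ and a species $j$, write $V_j^{(l)}(x):=\pi_{l+1}(x+e_j)/\pi_l(x)$, a well-defined positive real because $\pi_l\in\mathring\Delta_{\Ga_l}$. Under product form one has $V_j^{(l)}(x)=\tfrac{Z_l}{Z_{l+1}}\,f_j(x_j+1)/f_j(x_j)$, so the strategy is to recover these increment functions from the $V_j^{(l)}$ and glue them across levels.

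First I would use condition (b) to show that $V_j^{(l)}(x)$ depends only on $(j,l,x_j)$. Given $x,x'\in\Ga_l$ with $x_j=x'_j$, choose any $y\in\Ga_{l+1}$ with $y_j=x_j$ (possible since $x_j\leq l<l+1$ and $n\geq 2$); applying (b) to the pairs $(x,y)$ and $(x',y)$ with the \emph{common} constant $a_l$ gives $V_j^{(l)}(x)=a_lV_j^{(l+1)}(y)=V_j^{(l)}(x')$. Writing $V_j^{(l)}(x)=c_j(l,x_j)$, condition (b) also yields the recursion $c_j(l,m)=a_l\,c_j(l+1,m)$ for $m\geq 0$ and $l\geq\max(m,1)$. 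Setting $A(l):=\prod_{t=1}^{l-1}a_t^{-1}$ (so $A(1)=1$), the quantity $c_j(l,m)/A(l)$ is independent of $l$; call it $g_j(m)>0$, so that
$$V_j^{(l)}(x)=A(l)\,g_j(x_j)\qquad\text{for all valid }(l,j,x).$$
The crucial point is that the multiplier $a_l$ is the same for every species $j$, which is exactly what condition (b) asserts, so the level factor $A(l)$ is universal and only the species factor $g_j$ survives.

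Next I define $f_i(m):=\prod_{t=0}^{m-1}g_i(t)$ (with $f_i(0)=1$) and the candidate $\hat\pi_l(x):=\prod_{i}f_i(x_i)$, and prove $\pi_l=C_l\hat\pi_l$ on $\Ga_l$ by induction on $l$. For the base $l=1$, $\Ga_1=\{e_1,\dots,e_n\}$; computing $\pi_2(e_k+e_{k'})$ as both $V_{k'}^{(1)}(e_k)\pi_1(e_k)$ and $V_k^{(1)}(e_{k'})\pi_1(e_{k'})$ gives $g_{k'}(0)\pi_1(e_k)=g_k(0)\pi_1(e_{k'})$, so $\pi_1(e_k)/g_k(0)=:C_1$ is independent of $k$ and $\pi_1(e_k)=C_1\hat\pi_1(e_k)$. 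For the step, take $x\in\Ga_l$ and any $j$ with $x_j\geq 1$; rewriting $V_j^{(l-1)}(x-e_j)=A(l-1)g_j(x_j-1)$ gives $\pi_l(x)=A(l-1)g_j(x_j-1)\pi_{l-1}(x-e_j)$, and combining the inductive hypothesis with the identity $g_j(x_j-1)\hat\pi_{l-1}(x-e_j)=\hat\pi_l(x)$ yields $\pi_l(x)=A(l-1)C_{l-1}\hat\pi_l(x)$. Thus $C_l:=A(l-1)C_{l-1}$ works for every $x$ and, importantly, does not depend on the chosen $j$. Normalising gives $C_l=Z_l^{-1}$ with $Z_l=\sum_{x\in\Ga_l}\prod_i f_i(x_i)$, which is product form. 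Condition (a) appears as the complementary horizontal consistency ($\rho_l:=\pi_l/\hat\pi_l$ constant along the moves $e_j-e_k$), and in this rich state space it is confirmed by the two expressions for $\pi_l(x)$, so it dovetails with the construction.

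I expect the main obstacle to be the well-definedness that holds everything together: that $V_j^{(l)}(x)$ depends on $x$ only through $x_j$, and that the level constant $a_l$ (hence $A(l)$) is genuinely species-independent, so that $C_l=A(l-1)C_{l-1}$ is insensitive to which coordinate $j$ we decrement. Both are precisely the content of condition (b). A second delicate point is the hypothesis $\I=\Z_{\geq 1}$: the induction is anchored at $l=1$ by the single-particle states $e_k$, and this is where the $f_i$ get pinned down consistently across species. If instead $\I=\Z_{\geq\intval}$ with $\intval\geq 2$ this anchoring is lost and only the necessity direction (Lemma \ref{Lemma_nec}) survives, which is why sufficiency is claimed only for $\I=\Z_{\geq 1}$.
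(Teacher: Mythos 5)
Your proof is correct, but it follows a genuinely different route from the paper's. The paper constructs the product-form functions level by level, setting $f_i(l)=Z_{\Ga_l}\pi_l(le_i)$ and defining the normalising constants recursively via \eqref{eq:def_Z_l}; the entire effort goes into showing $Z_{\Ga_l}$ is well defined, with condition (a) of Remark \ref{eq_lem} handling the base case $l=2$ (cross-species consistency of $Z_{\Ga_2}$) and condition (b) driving the inductive step. You instead first ``integrate'' condition (b) globally: exploiting that the constant $a_i$ depends only on the level, you show every vertical ratio factors as $\pi_{l+1}(x+e_j)/\pi_l(x)=A(l)\,g_j(x_j)$, then define $f_j$ as cumulative products of the increments $g_j$ and verify $\pi_l=C_l\hat\pi_l$ by a single induction anchored at $l=1$; the constant $C_l=A(l-1)C_{l-1}$ is automatically independent of which coordinate is peeled off precisely because the level factor $A(l-1)$ is species-independent. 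A notable consequence of your argument is that condition (a) is never invoked: you in fact prove the mild strengthening that condition (b) alone suffices when $\I=\Z_{\geq 1}$ and $|\cS|\geq 2$, with (a) recovered a posteriori from the product form you construct. This extends to arbitrary $|\cS|$ what the paper records only for $|\cS|=2$ in Remark \ref{rem_2}, where (a) is vacuous. What the paper's route offers in exchange is an explicit recursion for the constants $Z_{\Ga_l}$ and a well-definedness mechanism stated so that its failure mode is visible when the bottom components are missing, which is the source of the necessity-only claim for $\I=\Z_{\geq \intval}$, $\intval\geq 2$; your closing remarks about the loss of the $l=1$ anchor make the same point. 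Both arguments rely on $|\cS|\geq 2$, strict positivity of the $\pi_l$, and the full-simplex structure of every $\Ga_l$.
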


\begin{proof}

We assume $n\geq 2$, and instead of Lemma \ref{Lemma_nec}, we use the equivalent conditions in Remark \ref{eq_lem}. Let $l=1$ and let us denote $\alpha _i=\pi _1(e_i)$ for $i=1,\ldots ,n$, defining completely the stationary distribution on $\Gamma _1$. Then $\sum _{i=1}^n \alpha _i=1$. For the stationary distribution to be of product form we require, at this level, the existence of functions $f_i:\mathbb{N}\longrightarrow \mathbb{R}_{>0}$ for $i=1,\ldots ,n$ such that 
\begin{equation}\label{eq:prop_l=1}\pi _1(e_i)=\frac{\left( \prod _{j\neq i}f_j(0)\right) f_i(1)}{Z_{\Gamma _1}}\mbox{,}\end{equation}
where $Z_{\Gamma _1}=\sum _{x\in \Gamma _1}\prod _{i=1}^nf_i(x_i)$. We can choose $f_i$ for $i=1,\ldots ,n$ mapping $0$ to $1$ and $1$ to $\alpha _i$, obtaining $Z_{\Gamma _1}=\sum _{i=1}^n\alpha _i=1$ such that the product-form functions satisfy equation (\ref{eq:prop_l=1}) for $i=1,\ldots ,n$.

For $l>1$ we will show that, if we assume the algebraic conditions imposed on the stationary distribution for being product form (i.e. as in Lemma \ref{Lemma_nec}), we can recursively define 
\begin{equation}\label{eq:def_f_i}f_i(l)=Z_{\Gamma _l}\pi _l(le_i)\end{equation}
where
\begin{equation}\label{eq:def_Z_l}Z_{\Gamma _l}=Z_{\Gamma _{l-1}}\frac{\pi _{l-1}(x)}{\pi _l(x+e_k)}\frac{f_k(x_k+1)}{f_k(x_k)}\end{equation}
for any $k\in [n]$ and $x\in \Gamma _{l-1}$ such that $x\neq (l-1)e_k$, in such a way that: (1) the definition of $Z_{\Gamma _l}$ does not depend on the choice of $x$ (as long as $x\neq (l-1)e_k$), and (2) the $f_i$ give product form stationary distribution for $\Ga_l$ (i.e. as in equation \eqref{eq:prod_form_app}).

We proceed by 2-step induction on $l$: Let $l=2$. Using the special structure assumed for the irreducible components, the stationary distribution is determined by $\pi _2(2e_i)$ for $i=1,\dots ,n$, and $\pi _2(e_i+e_k)$ for $i\neq k$. Requiring that it has product form imposes the following conditions: 
$$\pi _2(2e_i)=\frac{\left( \prod _{j\neq i}f_j(0)\right) f_i(2)}{Z_{\Gamma _2}}\mbox{,\; }\pi _2(e_i+e_k)=\frac{\left( \prod _{j\neq i,k}f_j(0)\right) f_i(1)f_k(1)}{Z_{\Gamma _2}}\mbox{.}$$
Note that for any $k$ and for any $x=e_i$ where $i\neq k$ in equation \eqref{eq:def_Z_l}, we obtain $Z_{\Gamma _2}=\frac{\alpha _i\alpha _k}{\pi _2(e_i+e_k)}$ according to the proposed definition. Let $z=e_j$ for any $j\neq k,i$. Then $x,z\in \Gamma _1$, $x_k,z_k<1$ (so $x$ and $z$ can both be used for \eqref{eq:def_Z_l}), and Remark \ref{eq_lem} (a), when applied to $x\in \Gamma_1$ and $y=x+e_k\in \Gamma _2$ (where $x_s=y_s$ for any $s\neq k$), ensures $\pi _1(e_j)\pi _2(e_i+e_k)=\pi _2(e_j+e_j)\pi _1(e_i)$, which implies
$$\frac{\alpha _i\alpha _k}{\pi _2(e_i+e_k)}=\frac{\alpha _j\alpha _k}{\pi _2(e_j+e_k)}\mbox{,}$$
proving that $Z_{\Gamma _2}$ is well defined. Therefore we only need to define $f_i(l)=Z_{\Gamma _2}\pi _2(2e_i)$ as in \eqref{eq:def_f_i} for each $i\in [n]$, which gives product-form stationary distribution.

We assume now that we have defined $f_i(l)$ for all $l\leq m+1$ as in \eqref{eq:def_f_i} by means of $Z_{\Gamma _l}$ and $\pi _l(le_i)$, while $Z_{\Gamma _l}$ itself is defined as in \eqref{eq:def_Z_l} in terms of $\Z_{\Gamma _{l-1}}$, $\pi _l$, $\pi _{l-1}$ and the $f_k(0),\ldots ,f_k(l-1)$ for $k\in [n]$, and that \eqref{eq:prod_form_app} holds for these. Let us show that then $Z_{\Gamma _{m+2}}$ can be defined as in \eqref{eq:def_Z_l} and does not depend on the choice of $k$ and $x$, and that choosing then $f_i(m+2)=Z_{\Gamma _{m+2}}\pi _{m+2}((m+2)e_i)$ for $i\in [n]$ gives the desired functions $f_1,\ldots ,f_n$ for the product form of $\pi _{m+2}$ as in \eqref{eq:prod_form_app}.

Choose any $w\in \Gamma _{m+1}$, and let $k$ be such that $w_k<m$, so that $w+e_k\in \Gamma _{m+2}$ can be chosen for \eqref{eq:def_Z_l}. Let $y\in \Gamma _{m+1}$ be different than $w$ and let $j$ be such that $y_j<m$, so $y+e_j\in \Gamma _{m+2}$ can be chosen for \eqref{eq:def_Z_l} too. Then there exist some $x,z\in \Gamma _m$ such that $x_j=y_j$ and $z_k=w_k$. We claim that
\begin{equation}\label{eq:claim}Z_{\Gamma _{m+1}}\frac{\pi _{m+1}(y)}{\pi _{m+2}(y+e_j)}\frac{f_j(y_j+1)}{f_j(y_j)}=Z_{\Gamma _{m+1}}\frac{\pi _{m+1}(w)}{\pi _{m+2}(w+e_k)}\frac{f_k(w_k+1)}{f_k(w_k)}\mbox{.}\end{equation} 

The induction hypothesis ensures that $$Z_{\Gamma _{m+1}}=Z_{\Gamma _m}\frac{\pi _m(x)}{\pi _{m+1}(x+e_j)}\frac{f_j(x_j+1)}{f_j(x_j)}=Z_{\Gamma _{m}}\frac{\pi _m(z)}{\pi _{m+1}(z+e_k)}\frac{f_k(z_k+1)}{f_k(z_k)}\mbox{,}$$
because $x$ and $z$ can also be chosen in \eqref{eq:def_Z_l}, so
$$\frac{\pi _m(x)\pi _{m+1}(z+e_k)}{\pi _{m+1}(x+e_j)\pi _{m}(z)}=\frac{f_k(z_k+1)f_j(x_j)}{f_j(x_j+1)f_k(z_k)}\mbox{.}$$
But Remark \ref{eq_lem} (b) makes the left hand side equal to $\frac{\pi _{m+1}(y)\pi _{m+2}(w+e_k)}{\pi _{m+2}(y+e_j)\pi _{m+1}(w)}$ and, by the way in which $x$ and $z$ have been chosen, the right hand side equals $\frac{f_k(w_k+1)f_j(y_j)}{f_j(y_j+1)f_k(w_k)}$, proving \eqref{eq:claim}.

To finish, note that the corresponding product-form functions solve the ME on $\Ga_{m+2}$, because the constructed functions $f_i$  give the values of $\pi (x) $ also for $x\neq (m+2)e_1$: this follows from the relation in Remark \ref{eq_lem} b), which is guaranteed to hold between the $\pi (x)$ for different points $x$ of $\Gamma _{m+2}$.  

\end{proof}

\begin{remark}\label{rem_2} If $|\cS|=2$, then the two conditions of Lemma \ref{Lemma_nec} are sufficient for product-form even if $\mathcal{I}=\Z_{\geq 2}$. To see this, it is enough to show that the Ansatz for product-form stationary distribution has a positive real solution if considered on $\Ga_2,\Ga_3$, e.g. by using $log(\cdot)$ and the Rouch\'e-Capelli Theorem, from which we deduce that there are infinitely many solutions. Note that this does not hold for arbitrary index sets $\mathcal{I}=\Z_{\geq j}$. Furthermore for $|\cS|=2$ only relations of Remark \ref{eq_lem} (b) have to be considered. 
\end{remark}

\subsection{Derivations of conclusions for the examples of Section \ref{sect_ex}}\label{subs_examples}
For the convenience of the reader, we give some details on the analysis of the examples considered in Section \ref{sect_ex}. We start with the analysis of the semi-algebraic sets, and then cover the types of the product-form functions for the product-form stationary distributions of the examples of type I.

\subsubsection{Analysis of the semi-algebraic sets for CRNs}
Let us partially present here the computations involving the examples in Section \ref{sect_ex}. For this, we consider the network in \eqref{eq:big_RN}, such that all the examples on two species from Section \ref{sect_ex} are subnetworks. The Master equation with the relevant reactions is
\begin{align}\label{eq:ME_gen_ex}
\1_{x\geq (1,0)} & \left[ \pi (x_1,x_2)\alpha x_1-\pi (x_1-1,x_2+1)\beta (x_2+1)\right] + \\
\1_{x\geq (0,1)} & \left[ \pi (x_1,x_2)\beta x_2 -\pi (x_1+1,x_2-1)\alpha (x_1+1)\right] + \nonumber \\
\1_{x\geq (1,1)} & \left[ \pi (x_1,x_2)\lambda _5x_1x_2-\pi (x_1+1,x_2-1)\lambda _3(x_1+1)x_1\right] +\nonumber \\
\1_{x\geq (2,0)} & \left[ \pi (x_1,x_2)(\lambda _1+\lambda _3)x_1(x_1-1)-\pi (x_1-2,x_2+2)\lambda _2(x_2+2)(x_2+1) \right. \nonumber \\
& \left. -\pi (x_1-1,x_2+1)\lambda _5(x_1-1)(x_2+1)\right] + \nonumber \\
\1_{x\geq (0,2)} & \left[ \pi (x_1,x_2)\lambda _2x_2(x_2-1)-\pi (x_1+2,x_2-2)\lambda _1(x_1+2)(x_1+1)\right] =0 \nonumber
\end{align}

We will next focus on the computations for W3, W4, NW2, NW3, NW4, NW5, for which it will be sufficient to consider the corresponding ideals. For each of them the corresponding Master equation can be obtained by setting in \eqref{eq:ME_gen_ex} the rates of the  reactions that are not present to zero. Then we determine their irreducible components, compute the kernel of the matrices $A(\kappa )$ using the algorithm given in Section \ref{h}, and obtain expressions for the ideals $I_l$ defined in Section \ref{app_prod_fn}. For the computations we used Maple, and even though we did not perform an analysis on the computational cost, for the examples considered here this was not an important issue: they took less than 3 seconds on the biggest example (including kernel computation, definition of $J_i$ ideals and computation of $I_i$ for $i\leq 5$). 

We next consider the irreducible components which, for these examples, have the following index sets (i.e. the $\Gamma _l, l\in \mathcal{I}$ are as in Theorem \ref{suff_condn}):
\begin{itemize}
\item $\mathcal{I}=\mathbb{Z}_{\geq 1}$:W3, NW2, NW3.
\item $\mathcal{I}=\mathbb{Z}_{\geq 2}$:W4.
\item $\mathcal{I}=\mathbb{Z}_{\geq 3}$:NW4, NW5.
\end{itemize}
We note here that, while Theorem \ref{suff_condn} does not apply to NW4, NW5 as the index sets are different, the ideals give necessary conditions for product form stationary distribution by Lemma \ref{Lemma_nec}. Furthermore for W3, NW2, NW3 and W4 we know by Theorem \ref{suff_condn} and Remark \ref{rem_2} that the conditions in $I_N$ for $N\in \mathbb{N}$ such that $I=I_N$, are also sufficient conditions.

As a consequence of the irreducible decompositions, $I_3$ is trivial for W4, NW4 and NW5, and the ideal $I_4$ is still trivial for NW4 and NW5, forcing us to compute further ideals in order to obtain nontrivial necessary conditions for product form, according to Lemma \ref{Lemma_nec}.

We next indicate for each of the mentioned examples the features that allows us to classify them as E/N form Section, as in  Section \ref{sect_ex}, based on these computations. The ideals $J_j$ can be written in terms of the $\pi (x)$, and this expression depends only on the irreducible component. Since the structure of the irreducible components is shared for all networks we are considering (for most of them the index set starts in 2 or 3), there are common expressions:
\begin{equation*}
J_3= \langle \pi _1(0,1)\pi _2(2,0)\pi _3(1,2)-\pi _1(1,0)\pi _2(0,2)\pi _3(2,1) \rangle 
\end{equation*}
\begin{align*}
J_4= J_3 + & \langle \pi _2(1,1)\pi _3(3,0)\pi _4(2,2)-\pi _2(2,0)\pi _3(1,2)\pi _4(3,1), \\
& \pi _2(0,2)\pi _3(2,1)\pi _4(1,3)-\pi _2(1,1)\pi _3(0,3)\pi 4(2,2),\\
& \pi _2(0,2)\pi _3(2,1)\pi _3(3,0)\pi _4(1,3)-\pi _2(2,0)\pi _3(1,2)\pi _3(0,3)\pi _4(3,1)\rangle 
\end{align*}
\begin{align*}
J_5 = J_4 + & \langle \pi _3(2,1)\pi _4(4,0)\pi _5(3,2)-\pi _3(3,0)\pi _4(2,2)\pi _5(4,1),\\
& \pi _3(1,2)\pi _4(3,1)\pi _4(4,0)\pi _5(2,3)-\pi _3(3,0)\pi _4(2,2)\pi _4(1,3)\pi _5(4,1),\\
& \pi _3(0,3)\pi _4(3,1)\pi _4(4,0)\pi _5(1,4)-\pi _3(3,0)\pi _4(1,3)\pi _4(0,4)\pi _5(4,1),\\
& \pi _3(1,2)\pi _4(3,1)\pi _5(2,3)-\pi _3(2,1)\pi _4(1,3)\pi _5(3,2),\\
& \pi _3(0,3)\pi _4(2,2)\pi _4(3,1)\pi _5(1,4)-\pi _3(2,1)\pi _4(1,3)\pi _4(0,4)\pi _5(3,2),\\
& \pi _3(0,3)\pi _4(2,2)\pi _5(1,4)-\pi _3(1,2)\pi _4(0,4)\pi _5(2,3) \rangle
\end{align*}

After substituting the $\pi _l(x)$ by elements in the kernel of $A(\kappa )$ for each example (i.e. by the $F_1(\k), \ldots ,F_m(\k)$ of Proposition \ref{clm2}) we obtain the corresponding ideals $I_3$, $I_4$, $I_5$, and the following observations can be made:
\begin{itemize}
\item In example NW2, the ideal $I_3$ contains the polynomial $\beta ^5\lambda _1^2(\alpha +\lambda _1)$, which has no solutions in the positive orthant of the rate space $\mathbb{R}^4_{>0}$. Hence NW2 is of type N.
\item For example NW3, $I_3$ contains a polynomial of the form $\lambda _2^3(\alpha + 2\lambda _1)$, making NW3 of type N.
\item For NW5, as we mentioned already, $I_3=I_4=(0)$, so we need $I_5$ for the first necessary conditions. It turns out again that $I_5$ contains a polynomial with only positive coefficients, which has also no positive solutions in the rate space $\mathbb{R}^3$.
\item For W3, we obtain the condition $\alpha ^2\lambda _2-\beta ^2\lambda _1=0$ already in $I_3$. Furthermore we note that the above condition is the condition for complex balance, so $M_\cG\subset I_3$  (product form implies complex balanced) and $\emptyset\neq V_{>0}(M_\cG)= V_{\cG,>0}$ by Lemma \ref{comp_toric}. 
\item For W4, $I_3$ is empty, but $I_4$ contains the element $\lambda _1\lambda _5^2-\lambda _2\lambda _3^2$, which must vanish at any set of rates potentially having product form stationary distribution. We note that the previous condition ensures complex balance for the CRN. So again by Lemma \ref{comp_toric}, $\emptyset\neq V_{>0}(M_\cG)= V_{\cG,>0}$. 
\item Finally, NW4 has trivial ideals $I_3$ and $I_4$, but $I_5$ yields some necessary conditions in this case, which include $6\lambda _1 - 6\lambda _2+7\lambda _3=0$, $\lambda _3^2+(2\la_1-6\la_2)\la_3+\la_1(\la_1-5\la_2)=0$.
In the following we prove that this implies that NW4 is of type N.

W.l.o.g., let $\la_3=1$ (the polynomials are homogeneous). Then the first condition gives that $\la_2=\frac{6\la_1+7}{6}$, the second gives $1+(2\la_1-6\la_2)+\la_1(\la_1-5\la_2)=0$. 
Hence we can insert the expression for $\la_2$ in the other, and we get the polynomial
$24\la_1^2+59\la_1+36$. This has no positive solution.
\end{itemize}

To complete our analysis, we note that NW11, NW12 are known as particle systems to have values with product-form stationary distribution (i.e. they are I or E), see \cite[Theorem 2.1]{inclusion_proc} or \cite{hoesslysta}. They both have irreducible components $\Gamma _l, l\in \Z_{\geq 1}$ as in Theorem \ref{suff_condn}. After a check of $I_2$ it is easy to find nontrivial elements in the ideal showing that not all values give product-form stationary distributions (i.e. it is enough to look at, e.g. $\pi_{1}(0,0,1)\pi_{2}(1,1,0)-\pi_{1}(1,0,0)\pi_2(0,1,1)$ to see that for some values of $\k$ it does not vanish).

\subsubsection{CRNs with product-form stationary distributions independently of the rate}\hfill\break
We next consider the examples of type I. As solutions for stationary distributions for such examples were already covered in depth in  \cite{hoesslyuni}, our presentation will be relatively brief.
\begin{itemize}
\item
Weakly reversible examples:\\
W1 is clear, W2 is from \cite[Example 5.1]{hoesslyuni}, and similarly W6 follows as it is a union of CRNs with compatible product-forms (see \cite[Remark 5.3]{hoesslyuni}).\item
Non-weakly reversible examples:\\
NW6-NW10 follow as in \cite[Example 5.1]{hoesslyuni}.
\end{itemize}
We next give the product-form functions for the stationary distributions,
$$g_1(m)=\frac{a_0^m}{m!},\quad g_2(m)=\frac{1}{m!}\prod_{l=1}^{m} \frac{a_1+a_2(l-1)}{a_3+a_4(l-1)},$$
$$g_3(m)=\frac{1}{m!}\prod_{l=1}^{m} \frac{a_5}{a_6+a_7(l-1)},\quad g_4(m)=\frac{1}{m!}\prod_{l=1}^{m} \frac{a_8+a_9(l-1)}{a_{10}},$$
where the $a_i$ are positive constants.
Then we classify the CRNs with type I according to the product-form functions in their product-form stationary distributions in the following table. The table contains the product-form functions for each species, where e.g. the first row signifies that CRN $W1$ has product-form stationary distribution 
$$ \pi(x)=\frac{1}{Z}f_1(x_1)f_2(x_2),$$
where both $f_1,f_2$ are of the form $g_1$ as above. We note that only the form is the same, but parameters might differ.
\begin{center}
\begin{tabular}{ |c|c|c|c|c| }
 \hline
Index  & in $S_1$ & in $S_2$ & in $S_3$& in $S_4$ \\
 \hline
W1  & $g_1$ &$g_1$& $-$& $-$  \\
            \hline
W2  & $g_2$ &$g_1$&$-$& $-$  \\
            \hline
W5  & $g_2$ &$g_1$&$g_2$& $-$  \\
            \hline    
W6  & $g_2$ &$g_1$&$g_2$& $g_1$  \\
            \hline                      

NW6  & $g_3$ &$g_1$& $-$& $-$   \\
            \hline
NW7  & $g_4$ &$g_1$& $-$& $-$   \\
            \hline
NW8  & $g_4$ &$g_4$& $-$& $-$   \\
            \hline
NW9  & $g_4$ &$g_3$& $-$& $-$   \\
            \hline    
NW10  & $g_3$ &$g_3$& $-$& $-$   \\
            \hline                      
 \end{tabular}
\end{center}

\end{document}